\numberwithin{equation}{section}
\theoremstyle{definition}
\newtheorem{example}{Example}[section]
\newtheorem{definition}[example]{Definition}
\theoremstyle{plain}
\newtheorem{theorem}[example]{Theorem}
\theoremstyle{plain}
\newtheorem{corollary}[example]{Corollary}
\theoremstyle{remark}
\theoremstyle{lemma}
\newtheorem{lemma}[example]{Lemma}
\theoremstyle{problem}
\theoremstyle{proposition}
\newtheorem{proposition}[example]{Proposition}
\theoremstyle{question}
\theoremstyle{conjecture}
\theoremstyle{theorem}
\newtheorem*{theoremnn*}{Theorem A}
\theoremstyle{corollarynn}
\newtheorem*{corollarynn*}{Theorem B}
\theoremstyle{corollarynnn}
\newtheorem*{corollarynnn*}{Theorem C}
\theoremstyle{definition}
\newtheorem*{definitionnn*}{Definition}
\begin{document}
\sectionfont{\fontsize{12}{15}\selectfont}
\subsectionfont{\fontsize{12}{15}\selectfont}

\begin{center}
{\Large \textbf{$J$-Stability of immediately expanding polynomial maps in $p$-adic dynamics}} \\\

LEE, Junghun
\footnote{
{\it Graduate School of Mathematics, Nagoya University, 
Nagoya 464-8602, Japan \\
(e-mail: m12003v@math.nagoya-u.ac.jp)}}
\end{center}
\begin{abstract}
Given a family $\{ f_{\lambda} \}_{\lambda \in \Lambda}$ of polynomial maps of degree $d$ where $\Lambda$ is the set of parameters, a polynomial map $f_{\lambda_0}$ is called {\it $J$-stable in $\Lambda$} if there exists a neighborhood of $\lambda_0$ in $\Lambda$ such that for any element $\lambda$ in the neighborhood, there exists a conjugacy between the dynamics on the Julia sets of $f_\lambda$ and $f_{\lambda_0}$.
The aim of this paper is to show that a polynomial map $f_{\lambda_0}$ over the field $\mathbb{C}_p$ of $p$-adic complex numbers is $J$-stable in the family of polynomial maps over $\mathbb{C}_p$ if $f_{\lambda_0}$ is {\it immediately expanding}.
\end{abstract}

\section{Introduction}

In the theory of complex dynamical systems, we consider the iterations $\{ f^{n} \}_{n \in \mathbb{N}}$ of a rational map $f : \hat{\mathbb{C}} \rightarrow \hat{\mathbb{C}}$ over the field $\mathbb{C}$ of complex numbers where $\hat{\mathbb{C}}$ is the Riemann sphere and $f^n$ denotes the $n$ th iteration of $f$.
The theory of complex dynamical systems was established by P. Fatou and G.
Julia in the early 20th century and they considered the Fatou set $\mathcal{F}(f)$ of $f$, which is defined as the largest open set on which the family $\{ f^n \}_{n \in \mathbb{N}}$ of iterated rational maps is equicontinuous, and the Julia set $\mathcal{J}(f)$ of $f$, which is a compact set defined as the complement of $\mathcal{F}(f)$.
These two notions, the Fatou set $\mathcal{F}(f)$ and the Julia set $\mathcal{J}(f)$, are essential in the theory of complex dynamical systems because the Fatou set $\mathcal{F}(f)$ and the Julia set $\mathcal{J}(f)$ are the stable region and the chaotic locus of dynamics, respectively, and both of them are completely invariant under $f$.
See \cite{Miln06} for more details on the theory of complex dynamical systems.

In the theory of $p$-adic dynamical systems, we consider the iterations $\{ f^n \}_{n \in \mathbb{N}}$ of a rational map $f : \hat{\mathbb{C}}_p \rightarrow \hat{\mathbb{C}}_p$ over the field $\mathbb{C}_p$ of $p$-adic complex numbers where $\hat{\mathbb{C}}_p$ is the projective line over $\mathbb{C}_p$.
As we do in the theory of complex dynamical systems, we also define the Fatou set $\mathcal{F}(f)$ of $f$ and the Julia set $\mathcal{J}(f)$ of $f$ with equicontinuity.
We also obtain the invariance of $\mathcal{F}(f)$ and $\mathcal{J}(f)$ under $f$. 
See \cite{Hs00} or \cite{Silv07} for more details.
Unlike the Riemann sphere $\hat{\mathbb{C}}$, the projective line $\hat{\mathbb{C}}_p$  over $\mathbb{C}_p$ has different topological properties such as non-compactness and disconnectedness. These also effect on the dynamics. 
For example, the Julia set of a polynomial map might be non-compact.

The theory of $p$-adic dynamical systems is relatively new, and mostly developed in this century.
For example, L-C. Hsia proved a $p$-adic analogue of Montel's theorem in \cite{Hs00}. 
R. Benedetto proved an analogue of Sullivan's no wandering domain theorem in \cite{Ben00}. He also found a polynomial map over $\mathbb{C}_p$ which has a wandering domain in \cite{Ben02}.
Moreover, J. Rivera-Letelier developed the theory of $p$-adic hyperbolic space in \cite{Riv03a} and \cite{Riv03b}.

There is a natural question in the theory of dynamical systems: are there any relations of the Julia sets of two maps if those two maps are close enough?
In the theory of complex dynamical systems, Man\~e-Sad-Sullivan proposed an answer to this question in \cite{MSS83}. We will see their theorem and its applications in subsection $1.1$ as a motivation of this paper.
However, in the theory of $p$-adic dynamical systems, there was no analogue of Man\~e-Sad-Sullivan's.
Our aim of this paper is to give an analogue of Man\~e-Sad-Sullivan's theorem in $p$-adic dynamics (Theorem \ref{2.1}). 
Moreover, we will apply it to the family $\{ z^d + c \}_{c \in \mathbb{C}_p}$ of polynomial maps of degree $d$ over $\mathbb{C}_p$ in $p$-adic dynamics (Theorem \ref{2.3}) when $d$ is not divided by $p$.

\subsection{Motivations from the theory of complex dynamical systems}


In the theory of complex dynamical systems, Man\~e-Sad-Sullivan established the notion of $J$-stability and proved the following theorem in \cite{MSS83}.
We shall denote {\it the Riemann sphere} by $\hat{\mathbb{C}} : = \mathbb{C} \cup \{ \infty \}$ where $\infty$ is a symbol which is not contained in $\mathbb{C}$.

\begin{theoremnn*}[Man\~e-Sad-Sullivan]
Let $f: \hat{\mathbb{C}} \rightarrow \hat{\mathbb{C}}$ be a rational map of degree $d \geq 2$ over $\mathbb{C}$.
Suppose that there exists a connected open neighborhood $U$ of $f$ in the family of rational maps of degree $d$ over $\mathbb{C}$ such that for any element $g$ in $U$, the number of attracting cycles of $g$ is equal to the number of attracting cycles of $f$.
Then for any $g$ in $U$, there exists a homeomorphism $h : \mathcal{J}(f) \rightarrow \mathcal{J}(g)$ such that 
$
h \circ f = g \circ h
$
on the Julia set $\mathcal{J}(f)$ of $f$.
\end{theoremnn*}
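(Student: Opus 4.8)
\emph{Sketch of a possible proof.} Write $f := f_{\lambda_0}$ and let $R(f) \subset \mathcal{J}(f)$ denote the set of repelling periodic points of $f$; recall that $R(f)$ is dense in $\mathcal{J}(f)$. The plan is the one introduced by Man\~e, Sad and Sullivan: produce a holomorphic motion of $R(f)$ over $U$ that intertwines the dynamics, extend it to $\mathcal{J}(f)$ by the $\lambda$-lemma, and read off the conjugacy $h$.

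The heart of the matter is the following persistence statement: \emph{for every $\lambda_1 \in U$ and every repelling periodic point $z_1$ of $f_{\lambda_1}$, there is a holomorphic continuation $\lambda \mapsto z(\lambda)$ over all of $U$ (along paths in $U$) such that $z(\lambda)$ is a repelling periodic point of $f_\lambda$ of the same period.} To prove this I would fix $\lambda_1$, let $A \subseteq U$ be the set of parameters to which the continuation extends while remaining repelling, and show $A$ is open and closed in the connected set $U$. Openness is the implicit function theorem: a repelling cycle has multiplier off the unit circle, in particular $\neq 1$, hence is a simple cycle and moves holomorphically, and repulsion is an open condition. For closedness, take $\lambda_j \to \lambda^* \in U$ with $\lambda_j \in A$; the continuations accumulate on a periodic point $z^*$ of $f_{\lambda^*}$ whose multiplier $\rho^*$ satisfies $|\rho^*| \geq 1$. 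If $|\rho^*| > 1$ we are back in the open case and $\lambda^* \in A$. If $|\rho^*| = 1$ with $\rho^* \neq 1$, then $z^*$ continues holomorphically near $\lambda^*$ with a multiplier function that is non-constant there (it analytically continues the multiplier, which was $>1$ in modulus at $\lambda_1$), so by the open mapping theorem it takes a value of modulus $<1$ at some nearby $\lambda'$; if $\rho^* = 1$, the classical unfolding of a parabolic cycle likewise yields a nearby parameter $\lambda'$ carrying an attracting cycle close to $z^*$. In either case $f_{\lambda'}$ has an attracting cycle absent from the (finite) list of attracting cycles of $f_{\lambda^*}$, while all attracting cycles of $f_{\lambda^*}$ persist to $\lambda'$ (being attracting is open); so $f_{\lambda'}$ has strictly more attracting cycles than $f_{\lambda^*}$, contradicting the hypothesis that this number is constant on the connected set $U$. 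Hence $A = U$.

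Granting persistence, one assembles the motion $\Phi : U \times R(f) \to \hat{\mathbb{C}}$, $\Phi(\lambda,z) := z(\lambda)$ (passing to the universal cover of $U$, or chaining holomorphic disks, if one wants the $\lambda$-lemma in its one-parameter form). For fixed $\lambda$ it is injective in $z$: a coincidence $z(\lambda_1) = w(\lambda_1)$ for distinct $z,w$ would make that common value a multiple fixed point of a suitable iterate of $f_{\lambda_1}$, hence a point of multiplier $1$, contradicting repulsion. It intertwines the dynamics, $\Phi(\lambda, f(z)) = f_\lambda(\Phi(\lambda,z))$, because $f_\lambda$ permutes each continued cycle consistently with $f$ on the original one. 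By the Man\~e-Sad-Sullivan $\lambda$-lemma, $\Phi$ extends to a holomorphic motion $\bar\Phi : U \times \mathcal{J}(f) \to \hat{\mathbb{C}}$ of $\overline{R(f)} = \mathcal{J}(f)$ by homeomorphisms, and the functional equation survives the passage to the closure by continuity and density. Applying the persistence statement with base point $\lambda$ shows that the repelling periodic points of $f_\lambda$ are exactly the $\bar\Phi(\lambda,\cdot)$-images of those of $f$, so $h := \bar\Phi(\lambda,\cdot)$ carries $\mathcal{J}(f) = \overline{R(f)}$ homeomorphically onto $\mathcal{J}(f_\lambda) = \overline{R(f_\lambda)}$ and satisfies $h \circ f = f_\lambda \circ h$, which is the asserted conclusion with $g = f_\lambda$.

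The main obstacle is the persistence statement of the second paragraph, and inside it the two bifurcation inputs: that a non-persistently irrationally indifferent cycle becomes attracting under an arbitrarily small parameter change (open mapping theorem for the multiplier), and that a parabolic cycle unfolds so as to spawn a nearby attracting cycle. The rest is bookkeeping of attracting cycles under perturbation (an open-condition count) together with two classical facts used as black boxes: density of repelling periodic points in the Julia set, and the $\lambda$-lemma.
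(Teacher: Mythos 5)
This statement (Theorem A) is not proved in the paper at all: it is quoted from [MSS83] purely as motivation for the $p$-adic results, so there is no proof of record to compare yours against. Judged on its own terms, your sketch is the standard Ma\~n\'e--Sad--Sullivan argument and is correct in outline: the open-closed (connectedness) argument for persistence of repelling cycles, driven by the constancy of the finite number of attracting cycles and the two bifurcation mechanisms (open mapping theorem applied to a non-constant multiplier at an irrationally indifferent parameter, and parabolic unfolding at multiplier $1$); then the equivariant holomorphic motion of the repelling periodic points, its extension to $\mathcal{J}(f)=\overline{R(f)}$ by the $\lambda$-lemma, and the identification of the image with $\mathcal{J}(f_\lambda)=\overline{R(f_\lambda)}$ by running persistence from the other base point. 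You correctly flag the genuine technical inputs you are using as black boxes (density of repelling cycles, the $\lambda$-lemma, the parabolic unfolding), and you notice the two points most often glossed over: injectivity of the motion and why the image of $R(f)$ is all of $R(f_\lambda)$.

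Two small points you should tighten if you write this out in full. First, in the closedness step, the limit $z^*$ of the continued cycle may a priori be a periodic point of strictly smaller minimal period (cycles can collide in the limit); the multiplier bookkeeping still goes through because the multiplier of the period-$n$ cycle is a power of that of the collapsed cycle, but this case needs to be mentioned. Second, the continuation ``along paths in $U$'' can be path-dependent (monodromy), so the set $A$ should be defined relative to a fixed path, or you should work on the universal cover throughout and only descend at the end for the fixed target parameter $\lambda_1$; for producing the single conjugacy $h$ demanded by the statement this is harmless, as you indicate. It is also worth contrasting your argument with what the paper actually does for its $p$-adic analogues (Theorems \ref{2.1} and \ref{2.2}): there no holomorphic motion or $\lambda$-lemma is available, and the conjugacy is instead built by hand as a uniform limit of maps $h_k$ defined on the preimage sets $\Omega_k=f^{-k}(B)$, using ultrametric estimates and the expansion hypothesis in place of the transversality and normal-families machinery of the complex case.
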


We say such a rational map $f : \hat{\mathbb{C}} \rightarrow \hat{\mathbb{C}}$ is {\it $J$-stable in the family of rational maps}.

The following theorem is an application of Theorem A to hyperbolic polynomial maps.
Recall that a polynomial map $f : \hat{\mathbb{C}} \rightarrow \hat{\mathbb{C}}$ over $\mathbb{C}$ is {\it hyperbolic} if there exist a $\lambda > 1$ and a $c > 0$ such that $|(f^{n})'(z)| \geq c \cdot \lambda^n$ for any $z$ in the Julia set $\mathcal{J}(f)$ of $f$ and $n$ in $\mathbb{N}$.

\begin{corollarynn*}
Let $f : \hat{\mathbb{C}} \rightarrow \hat{\mathbb{C}}$ be a polynomial map of degree $d \geq 2$ over $\mathbb{C}$.
If $f$ is hyperbolic,
then there exists a connected open neighborhood $U$ of $f$ in the family of polynomial maps of degree $d$ over $\mathbb{C}$ such that for any $g$ in $U$, there exists a homeomorphism $h : \mathcal{J}(f) \rightarrow \mathcal{J}(g)$ such that 
$
h \circ f = g \circ h
$
on the Julia set $\mathcal{J}(f)$ of $f$.
\end{corollarynn*}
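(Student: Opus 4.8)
\noindent The plan is to deduce Theorem B from Theorem A by showing that hyperbolicity makes the number of attracting cycles locally constant near $f$. Recall the standard fact (see \cite{Miln06}) that a polynomial map $f$ is hyperbolic exactly when every critical point of $f$ in $\hat{\mathbb{C}}$ lies in the basin of some attracting cycle --- including the superattracting fixed point $\infty$ --- so that in particular $\mathcal{J}(f)$ contains no critical point. Let $A_{1},\dots,A_{m}$ be the attracting cycles of $f$ in $\hat{\mathbb{C}}$, with $A_{m}=\{\infty\}$. For each $j$ I would fix an open \emph{trapping region} $V_{j}\supset A_{j}$ with $\overline{f(V_{j})}\subset V_{j}$, built from a linearizing (Koenigs) disc or a B\"ottcher disc around the cycle together with finitely many of its iterated preimages --- for $j=m$ one takes $V_{m}=\{\,|z|>R\,\}$ with $R$ large --- arranged so that $W:=\bigcup_{j}V_{j}$ is forward invariant and absorbs every critical orbit; then choose $N$ with $f^{N}(c)\in W$ for every critical point $c$ of $f$.

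\emph{Persistence of the attracting cycles.} Each $A_{j}$ is a nondegenerate cycle of period $k_{j}$ with multiplier of modulus $<1$; by the implicit function theorem applied to $f^{k_{j}}(z)=z$ together with continuity of the multiplier in the map, for every $g$ in a sufficiently small neighborhood of $f$ in the family of rational maps of degree $d$ there is a cycle $A_{j}(g)$ of the same period close to $A_{j}$ which is still attracting. Hence such a $g$ has at least $m$ attracting cycles; and a polynomial perturbation keeps $\infty$ a superattracting fixed point, so this also holds within the family of polynomial maps of degree $d$.

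\emph{No new attracting cycles --- the crux.} This is the step in which hyperbolicity is indispensable and which I expect to be the main obstacle. The relation $\overline{f(V_{j})}\subset V_{j}$ is the inclusion of a compact set in an open set, hence an open condition on the map; therefore, after shrinking the neighborhood, $\overline{g(V_{j})}\subset V_{j}$ for every $j$, so each $V_{j}$ is a trapping region for $g$ contained in the immediate basin of $A_{j}(g)$. Since the critical points of $g$ depend continuously on $g$, shrinking the neighborhood once more gives $g^{N}(c)\in W$ for every critical point $c$ of $g$, so every critical point of $g$ is attracted to one of $A_{1}(g),\dots,A_{m}(g)$. By Fatou's theorem every attracting cycle of $g$ attracts at least one critical point of $g$, and that critical point lies in the basin of some $A_{j}(g)$; consequently the attracting cycle in question must be $A_{j}(g)$ itself. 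Thus $g$ has exactly the $m$ attracting cycles $A_{1}(g),\dots,A_{m}(g)$.

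Combining the two steps, there is a connected open neighborhood of $f$ in the family of rational maps of degree $d$ on which the number of attracting cycles is constantly $m$; by Theorem A, for every $g$ in that neighborhood there is a homeomorphism $h:\mathcal{J}(f)\to\mathcal{J}(g)$ with $h\circ f=g\circ h$ on $\mathcal{J}(f)$. Taking $U$ to be a small ball around $f$ in the family of polynomial maps of degree $d$ contained in that neighborhood --- which is connected and open in the polynomial family --- yields the assertion of Theorem B.
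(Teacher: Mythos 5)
Your proposal is correct and follows exactly the route the paper intends: the paper states Theorem~B in its motivational section without proof, as a known application of Theorem~A, and your argument supplies the standard details (persistence of the attracting cycles via the implicit function theorem, and the trapping-region/Fatou critical-point count showing no new attracting cycles appear), which is precisely how this corollary is derived in the complex-dynamics literature. No gaps; the only cosmetic point is that Theorem~A as quoted concerns the full rational family, so one either runs your open-condition argument there and then restricts to a polynomial ball, as you do, or invokes the Ma\~n\'e--Sad--Sullivan theorem directly for the holomorphic family of polynomials.
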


We say such a polynomial map $f : \hat{\mathbb{C}} \rightarrow \hat{\mathbb{C}}$ is {\it $J$-stable in the family of polynomial maps}.
Let us see an application of Theorem A to the family $\{ z^d + c \}_{c \in \mathbb{C}}$ of polynomial maps of degree $d \geq 2$ over $\mathbb{C}$.

\begin{corollarynnn*}\label{corC}
Let $d$ be a natural number with $d \geq 2$ and consider the polynomial maps 
\begin{align*}
f_{ c}(z) := 
\begin{cases}
z^d + c &(z \in \mathbb{C}), \\
\infty &(z = \infty).
\end{cases}
\end{align*}
where $c$ in $\mathbb{C}$.
Suppose that the parameters $c$ and $c'$ satisfy
$$
\lim_{k \rightarrow \infty} | f_{c}^{k} (0)| = \lim_{k \rightarrow \infty} | f_{c'}^{k}(0)| = \infty.
$$
Then there exists a homeomorphism $h : \mathcal{J}(f_c) \rightarrow \mathcal{J}(f_{c'})$ such that $h \circ f_{ c} = f_{ c'} \circ h$ on the Julia set $\mathcal{J}(f_{ c})$ of $f_c$.
\end{corollarynnn*}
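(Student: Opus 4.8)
The plan is to derive this as an application of Corollary B (hence, ultimately, of Theorem A). Set $\Omega := \{\, c \in \mathbb{C} : \lim_{k\to\infty}|f_c^{k}(0)| = \infty \,\}$, an open subset of $\mathbb{C}$ (classically, the complement of the degree-$d$ Multibrot set). First I would check that $f_c$ is hyperbolic for every $c\in\Omega$: since $f_c'(z)=d\,z^{d-1}$, the unique finite critical point of $f_c$ is $0$, and by hypothesis its forward orbit converges to the superattracting fixed point $\infty$; hence the closure of the postcritical set of $f_c$ lies in $\mathcal{F}(f_c)$ and is disjoint from $\mathcal{J}(f_c)$, which is the standard criterion giving an expansion estimate $|(f_c^{n})'(z)| \geq c_0\,\lambda^{n}$ on $\mathcal{J}(f_c)$ for suitable $\lambda>1$ and $c_0>0$. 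By Corollary B, each such $f_c$ is therefore $J$-stable in the family of polynomial maps of degree $d$: there is an open neighborhood $U_c$ of $f_c$ in that family such that every $g\in U_c$ admits a homeomorphism $\mathcal{J}(g)\to\mathcal{J}(f_c)$ conjugating the dynamics.

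Next I would globalize this over $\Omega$. Define a relation on $\Omega$ by declaring $c\approx c'$ when there is a homeomorphism $h:\mathcal{J}(f_c)\to\mathcal{J}(f_{c'})$ with $h\circ f_c = f_{c'}\circ h$ on $\mathcal{J}(f_c)$; since inverses and composites of conjugacies are conjugacies, this is an equivalence relation. Because $c\mapsto f_c$ is continuous and $\Omega$ is open, the previous paragraph shows that for each $c\in\Omega$ every $c'$ sufficiently close to $c$ satisfies $c'\approx c$, so each equivalence class is open in $\Omega$. The classes then form a partition of $\Omega$ into disjoint nonempty open sets, and since the hypothesis says precisely that $c,c'\in\Omega$, the theorem follows as soon as $\Omega$ is known to be connected.

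The one substantial input left is the connectivity of $\Omega$, and this I expect to be the main obstacle. It is the classical B\"ottcher-coordinate argument of Douady--Hubbard (and Branner for general $d$): each $f_c$ has a conformal conjugacy $\varphi_c$ to $w\mapsto w^{d}$ near $\infty$, normalized by $\varphi_c'(\infty)=1$; the Green's function $G_c=\log|\varphi_c|$ extends continuously to $\hat{\mathbb{C}}$; and the assignment $c\mapsto\varphi_c(c)$ (the B\"ottcher coordinate of the critical value, continued as far as it goes) is a conformal isomorphism from $\Omega$ onto $\{\,|w|>1\,\}$, so $\Omega$ is connected. The work is the holomorphic dependence of $\varphi_c$ on $c$ and the bijectivity of that assignment; everything else is soft. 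As an aside, connectivity can be bypassed altogether: for $c\in\Omega$ the equipotential $\{G_c=d\,G_c(0)\}$ bounds a topological disk $D_c\supset\mathcal{J}(f_c)$, the preimage $f_c^{-1}(D_c)=\{G_c<G_c(0)\}$ has exactly $d$ components (the only critical point of $G_c$ at that level is $0$), and $f_c$ carries each component homeomorphically onto $D_c$; the itinerary map then conjugates $f_c|_{\mathcal{J}(f_c)}$ to the one-sided full shift on $d$ symbols, a model independent of $c$. This last description is the one that will be adapted to the $p$-adic counterpart later in the paper.
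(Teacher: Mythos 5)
Your main line of argument is essentially the paper's own: the paper disposes of Theorem C in one sentence by observing that $\Omega := \{\, c \in \mathbb{C} \mid \lim_{k \rightarrow \infty} |f_c^{k}(0)| = \infty \,\}$ is open and connected and that every $f_c$ with $c \in \Omega$ has exactly one attracting cycle (the one at $\infty$), so that Theorem A applies. You reach the same conclusion by verifying hyperbolicity, invoking Theorem B for local $J$-stability, and then running the open-equivalence-class argument on the connected set $\Omega$; this is correct and rests on exactly the same two inputs, namely Ma\~n\'e--Sad--Sullivan stability plus connectivity of $\Omega$, and like the paper you must import that connectivity from B\"ottcher-coordinate theory (the paper cites Beardon, Theorem 9.10.2, for this). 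What is genuinely different is your closing aside: coding $f_c$ restricted to $\mathcal{J}(f_c)$ by the full one-sided shift on $d$ symbols for every $c \in \Omega$ produces the conjugacy directly, with no appeal to Theorem A and no use of the connectivity of $\Omega$. That alternative buys an elementary, self-contained proof, and, as you observe, it is the only one of the two mechanisms with any hope of surviving the passage to $\mathbb{C}_p$, where the analogous parameter set is totally disconnected and the paper's $p$-adic version must instead impose the closeness condition $|c - c'|_p \leq |c|_p^{1/d}$; indeed the paper's Section 4 realizes exactly this shift model for certain $p$-adic quadratic maps.
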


Since the set $\{c \in \mathbb{C} \mid \lim_{k \rightarrow \infty} | f^{k}_c(0)| = \infty \}$ where $f_c : \hat{\mathbb{C}} \rightarrow \hat{\mathbb{C}}$ is the polynomial map defined in Theorem C is open and connected in $\mathbb{C}$ and every $f_c$ whose parameter is in $\{c \in \mathbb{C} \mid \lim_{k \rightarrow \infty} | f^{k}_c(0)| = \infty \}$ has only one attracting cycle at infinity, the proof of Theorem C follows immediately from Theorem A.

\subsection{The main theorems}

Let us begin with the definition of {\it $J$-stability in the family of polynomial maps} of $p$-adic dynamics, which is an analogue of $J$-stability in the family of polynomial maps of complex dynamics.
We shall use $\mathbb{C}_p$, $|\cdot|_p$, and $Poly_d(\mathbb{C}_p)$ to denote {\it the field of $p$-adic complex numbers}, {\it the $p$-adic norm on $\mathbb{C}_p$}, and {\it the family of polynomial maps of degree $d \geq 1$}, respectively.
In particular, we consider $Poly_{d}(\mathbb{C}_p)$ as a topological space with the topology of $\mathbb{C}_p^{\times} \times \mathbb{C}_p^{d}$.
We shall also denote {\it the projective line over $\mathbb{C}_p$} by $\hat{\mathbb{C}}_p : = \mathbb{C}_p \cup \{ \infty \}$ where $\infty$ is a symbol which is not contained in $\mathbb{C}_p$.
See section $2$ for more details.

\begin{definitionnn*}
Let $p$ be a prime number, $f : \hat{\mathbb{C}}_p \rightarrow \hat{\mathbb{C}}_p$ be a polynomial map of degree $d \geq 2$ over $\mathbb{C}_p$, and $A$ be a set in $\mathbb{C}_p$.

\begin{enumerate}

\item
We say that $f$ is {\it $J$-stable in $Poly_d(\mathbb{C}_p)$} if there exists a neighborhood $U$ of $f$ in $Poly_d(\mathbb{C}_p)$ such that for any $g$ in $U$, there exists a homeomorphism $h: \mathcal{J}(f) \rightarrow \mathcal{J}(g)$ such that
$$
h \circ f = g \circ h 
$$ 
on the Julia set $\mathcal{J}(f)$ of $f$.

\item
We say that $f$ is {\it immediately expanding on $A$} if there exists a $\lambda > 1$ such that for any $z$ in $A$, $|f'(z)|_p \geq \lambda$. 
In particular, we say that $f$ is {\it immediately expanding} if the Julia set $\mathcal{J}(f)$ of $f$ is non-empty and $f$ is immediately expanding on $\mathcal{J}(f)$.
\end{enumerate}

\end{definitionnn*}

The aim of this paper is to prove the following theorem that is an analogue of Theorem B in $p$-adic dynamics.

\begin{theorem}\label{2.1}
Let $p$ be a prime number and $f : \hat{\mathbb{C}}_p \rightarrow \hat{\mathbb{C}}_p$ be a polynomial map of degree $d \geq 2$ over $\mathbb{C}_p$.
Suppose that $f$ is immediately expanding.
Then $f$ is $J$-stable in $Poly_d(\mathbb{C}_p)$.
\end{theorem}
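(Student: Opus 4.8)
The plan is to mimic the classical argument for expanding maps: construct the conjugacy $h$ first locally, by a graph-transform / holomorphic-motion type argument adapted to the non-archimedean setting, then patch the local pieces together using the dynamics. First I would record the structural facts about $f$ that immediate expansion forces. Since $|f'(z)|_p \geq \lambda > 1$ for every $z \in \mathcal{J}(f)$, and $f$ is a polynomial, the Julia set is contained in a bounded region of $\mathbb{C}_p$ where $f$ is locally invertible with expansion at least $\lambda$; in particular $f$ restricted to (a neighborhood of) $\mathcal{J}(f)$ has no critical points, and $\mathcal{J}(f)$ is a compact (hence, in the non-archimedean world, also ``nice'') totally invariant set carrying an expanding dynamical system. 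I would then cover $\mathcal{J}(f)$ by finitely many disks $D_1,\dots,D_N$ on each of which $f$ is injective and on each of which the inverse branches contract by a factor $\le \lambda^{-1} < 1$ in the $p$-adic metric. This Markov-type finite cover is the combinatorial skeleton of the conjugacy.

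The second step is the perturbation argument. For $g$ close to $f$ in $Poly_d(\mathbb{C}_p)$, the coefficients of $g$ are close to those of $f$, so $|g'(z) - f'(z)|_p$ is small uniformly on a fixed bounded region; by the ultrametric inequality $|g'(z)|_p = |f'(z)|_p \geq \lambda$ on a neighborhood of $\mathcal{J}(f)$, so $g$ is also expanding there with the same constant. I would then show that each inverse branch $f_i^{-1}$ of $f$ on $D_i$ perturbs to an inverse branch $g_i^{-1}$ of $g$ on (a slightly shrunken) $D_i$, uniformly close to $f_i^{-1}$; this is a $p$-adic inverse function / Hensel-type statement and is routine given the derivative bound. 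Composing these branches along the Markov graph, the maps $g_{i_0}^{-1}\circ g_{i_1}^{-1}\circ\cdots\circ g_{i_n}^{-1}$ form a Cauchy sequence (because of the uniform contraction $\lambda^{-1}$) whose limits, as the admissible word $(i_0,i_1,\dots)$ ranges over the shift space coding $\mathcal{J}(g)$, parametrize $\mathcal{J}(g)$. Matching codings for $f$ and $g$ defines a bijection $h:\mathcal{J}(f)\to\mathcal{J}(g)$; the uniform closeness of the branches gives continuity of $h$ and of $h^{-1}$, hence $h$ is a homeomorphism, and $h\circ f = g\circ h$ holds because $h$ intertwines the two shift codings by construction.

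There are two places I expect to do real work. First, I must verify that $\mathcal{J}(f)$ genuinely admits a finite Markov cover with the stated injectivity and contraction properties; in complex dynamics this is standard for hyperbolic maps, but in the $p$-adic setting one should argue it directly from the hypothesis, using that $\hat{\mathbb{C}}_p$ is totally disconnected so the ``disks'' can be taken to be clopen and the combinatorics is cleaner, while being careful that $\mathcal{J}(f)$, though possibly non-compact a priori, is in fact bounded and closed hence compact here because immediate expansion pushes the Julia set into a bounded ball (the basin of $\infty$ contains all large $z$ and, by expansion, a fixed neighborhood of it). Second — and this is the main obstacle — I must show the coding of $\mathcal{J}(g)$ has the \emph{same} admissible-word structure as that of $\mathcal{J}(f)$ for $g$ sufficiently close to $f$, i.e. that the Markov partition is stable under perturbation. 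This is where one uses that $g$ has degree exactly $d$ (so inverse branches are counted correctly) together with the ultrametric strengthening of the Montel-type / equicontinuity estimates: small perturbations move the images of the partition disks by less than the gaps between them, so no new overlaps or missed pieces appear. Once that stability is in hand, the conjugacy $h$ is forced and the theorem follows.
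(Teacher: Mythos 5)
Your overall dynamical idea (expansion forces contracting inverse branches; pull back through them; get a Cauchy sequence whose limit is the conjugacy) is the same engine that drives the paper's proof, and your observation that the ultrametric inequality gives $|g'(z)|_p=|f'(z)|_p\geq\lambda$ for small perturbations is exactly the paper's key stability estimate. But there is a genuine gap at the foundation of your construction: you assert that $\mathcal{J}(f)$ is ``bounded and closed hence compact.'' In $\mathbb{C}_p$ this implication fails, because $\mathbb{C}_p$ is not locally compact -- the closed unit disk $\overline{D}_1(0)$ is itself non-compact (the residue field of $\mathbb{C}_p$ is infinite, so $\overline{D}_1(0)$ splits into infinitely many disjoint clopen disks of radius $<1$ with no finite subcover), and the paper explicitly warns that Julia sets of polynomials over $\mathbb{C}_p$ may be non-compact. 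Consequently the finite Markov cover $D_1,\dots,D_N$ that your whole coding scheme rests on need not exist, and a coding by a shift on a finite alphabet would in fact force $\mathcal{J}(g)$ to be compact, which is not part of the conclusion. Your second flagged difficulty (stability of the admissible-word structure under perturbation) is therefore not even well posed as stated, since there is no finite partition to stabilize.

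The paper circumvents compactness entirely: it fattens the Julia set to $B=\bigcup_{z\in\mathcal{J}(f)}\overline{D}_\mu(z)$ (a possibly infinite union of disks, chosen so that $B$ is backward invariant, critical-point free, and uniformly ``thick''), and then builds the conjugacy pointwise rather than combinatorially. For each $z$ with $f(z)\in B$ and each nearby $g$, the ultrametric Weierstrass preparation / Hensel-type lemma (Lemma 3.2 together with Theorem 2.5) produces a \emph{unique} $g$-preimage of the appropriate point within distance $\mu/\lambda$ of $z$; iterating this ``nearest preimage'' selection gives maps $h_k$ on $f^{-k}(B)$ with $|h_{k+1}-h_k|\leq\mu/\lambda^{k+1}$, and the limit is the conjugacy on $\mathcal{J}(f)$, identified with $\mathcal{J}(g)$ via a repelling periodic point and Theorem 2.4. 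If you want to salvage your approach, you would need either to prove compactness of $\mathcal{J}(f)$ under the immediate-expansion hypothesis (which is not available in general and is not how the paper proceeds), or to replace the finite Markov partition by this kind of uniform, partition-free shadowing argument.
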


We will also show the following theorem to prove Theorem \ref{2.1}.
We shall use $\overline{D}_r(a)$ to denote the set $\{ z \in \mathbb{C}_p \mid |z - a|_p \leq r \}$ for an $a \in \mathbb{C}_p$ and an $r > 0$.

\begin{theorem}\label{2.2}
Let $p$ be a prime number and $f : \hat{\mathbb{C}}_p \rightarrow \hat{\mathbb{C}}_p$ be a polynomial map of degree $d \geq 2$ over $\mathbb{C}_p$.
Suppose that the Julia set $\mathcal{J}(f)$ of $f$ is non-empty and there exists a non-empty set $B$ in $\mathbb{C}_p$ satisfying the following conditions.
\begin{enumerate}
\item The polynomial map $f$ is immediately expanding on $B$. 
\item The set $B$ is backward invariant under $f$, that is, $f^{-1}(B) \subset B$.
\item There exists a positive real number $M$ such that $B \subset \overline{D}_{M}(0)$.
\item There exists a positive real number $\delta$ such that $\overline{D}_{\delta}(z) \subset B$ for any $z \in B$.
\end{enumerate}
Then, $f$ is $J$-stable in $Poly_d(\mathbb{C}_p)$.
\end{theorem}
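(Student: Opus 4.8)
The plan is to build the conjugacy $h$ directly by a symbolic-dynamics / telescoping-limit argument, using the fact that immediate expansion on $B$ makes $f$ locally injective and "distance-contracting under preimages" there. First I would show that the Julia set $\mathcal{J}(f)$ is contained in $B$: indeed $\mathcal{J}(f)$ is backward invariant under $f$, and since $\mathcal{J}(f)$ is non-empty and $f$ is immediately expanding on $B \supset f^{-1}(B)$, one argues that $\mathcal{J}(f)$ must be trapped inside $B$ (the Julia set is the closure of the backward orbit of any of its points, and condition (2) together with the expanding estimate forces those preimages to stay in $B$; condition (4) gives that $B$ is open, so the relevant closures stay inside as well). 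This reduces Theorem \ref{2.2} to a situation where $f$ is expanding on a neighborhood of its own Julia set, which is the natural $p$-adic analogue of the hyperbolic hypothesis in Theorem B.

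Next I would set up the perturbation. Given $g$ close to $f$ in $Poly_d(\mathbb{C}_p)$, on the (bounded, by (3)) region $\overline{D}_M(0)$ the coefficients of $g$ are $p$-adically close to those of $f$, so $|g(z) - f(z)|_p$ and $|g'(z) - f'(z)|_p$ are uniformly small on $B$; by the ultrametric inequality this gives $|g'(z)|_p \geq \lambda$ on $B$ as well, so $g$ is immediately expanding on $B$ too, and $g^{-1}(B) \subset B$ is preserved for small enough perturbations (here the uniform inner-radius condition (4) is exactly what guarantees that a small move of $f$ does not push preimages out of $B$ — this is where the $p$-adic rigidity really helps, since "small" perturbations are genuinely small in the ultrametric sense). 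Then one defines $h$ on $\mathcal{J}(f)$ by following orbits: for $x \in \mathcal{J}(f) \subset B$, the forward orbit $(f^n(x))_n$ stays in $B$, and I would use the expansion estimate to show that the "shadowing" point for $g$ — the unique point whose $g$-orbit tracks $(f^n(x))_n$ — exists, is unique, and depends continuously on $x$. Concretely, $h(x) := \lim_{n\to\infty} g_x^{-n}(f^n(x))$ where $g_x^{-n}$ denotes the appropriate branch of $g^{-n}$ selected along the orbit; expansion of $g$ makes these inverse branches contract by a factor $\le \lambda^{-1}$, so the limit converges geometrically and is independent of harmless choices.

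Then I would verify the three required properties of $h$: the conjugacy relation $h \circ f = g \circ h$ is immediate from the shadowing construction (shifting the orbit by one step); injectivity follows because expansion of $f$ lets one run the same construction backwards, producing an inverse map $\mathcal{J}(g) \to \mathcal{J}(f)$, so $h$ is a bijection; and continuity in both directions follows from the uniform geometric rate of convergence in the defining limit, which makes $h$ and $h^{-1}$ uniform limits of continuous maps — hence $h$ is a homeomorphism $\mathcal{J}(f) \to \mathcal{J}(g)$. Finally, Theorem \ref{2.1} drops out of Theorem \ref{2.2} by taking $B$ to be a suitable closed-ball neighborhood of $\mathcal{J}(f)$ on which $|f'|_p \ge \lambda$: one inflates $\mathcal{J}(f)$ slightly, checks that immediate expansion persists on the inflation (ultrametrically, $|f'|_p$ is locally constant away from finitely many radii), intersects with a large ball to get (3), and uses the ultrametric to get the uniform inner radius (4); backward invariance (2) of the inflated set is arranged by replacing $B$ with $\bigcap_{n\ge 0} f^{-n}(B)$ and checking it still satisfies (1), (3), (4).

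The main obstacle I expect is \textbf{establishing backward invariance and the uniform inner-radius condition simultaneously for the neighborhood $B$ in Theorem \ref{2.1}}, and more subtly, ensuring in Theorem \ref{2.2} that the selected inverse branches $g_x^{-n}$ are genuinely well-defined along the whole orbit: because $\hat{\mathbb{C}}_p$ is totally disconnected, "branch of the inverse" is not automatic from a single estimate at a point, and one needs the condition $\overline{D}_\delta(z) \subset B$ to furnish an honest neighborhood on which $g$ is injective (via the expansion estimate plus the $p$-adic inverse function / Hensel-type argument) so that the contraction estimate can be iterated. Handling the non-archimedean subtleties of "which preimage to pick" — and showing the choice is forced once the orbit is fixed — is where the real work lies; the convergence and homeomorphism arguments afterwards are then routine ultrametric estimates.
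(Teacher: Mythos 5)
Your overall strategy coincides with the paper's: you show that a small perturbation $g$ of $f$ still expands on $B$ and still satisfies $g^{-1}(B)\subset B$ (the paper does this one monomial at a time via $T_{i,\epsilon}(f)=f+\epsilon z^i$ and the Weierstrass preparation theorem, but the content is the same), and you then build the conjugacy as the limit of inverse branches of $g$ applied along the $f$-orbit, with geometric convergence at rate $\lambda^{-1}$; this is exactly the paper's inductive family $h_{k+1}(z)=$ the unique point of $g^{-1}(\{h_k(f(z))\})$ within $\mu/\lambda^{k+1}$ of $h_k(z)$. Your identification of where the work lies (locating the $d$ preimages of a point of $B$ under the perturbed map inside small disks around the preimages under $f$, via a Newton-polygon/Hensel argument) is also accurate.

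There is, however, one genuine gap: you never show that the map you construct sends $\mathcal{J}(f)$ \emph{onto} $\mathcal{J}(g)$, nor even that $\mathcal{J}(g)$ is non-empty. The shadowing limit is naturally defined on $\Omega_\infty=\bigcap_{k\ge 0}f^{-k}(B)$ and lands in $\Omega_\infty(g)=\bigcap_{k\ge 0}g^{-k}(B)$, and these sets may strictly contain the respective Julia sets; a topological conjugacy between the restrictions of $f$ and $g$ to these invariant sets does not by itself identify the Julia sets, which are defined by equicontinuity on all of $\hat{\mathbb{C}}_p$. This is not a vacuous worry in the $p$-adic setting, where Julia sets are frequently empty (e.g.\ $\mathcal{J}(z^d)=\emptyset$), so $\mathcal{J}(g)\neq\emptyset$ genuinely has to be proved. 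The paper closes this gap by a separate argument: Hsia's theorem puts a periodic point $P$ of $f$ inside $B$, expansion on $B$ makes it repelling, hence $P\in\mathcal{J}(f)\subset\Omega_\infty$; then $h_\infty(P)$ is a repelling periodic point of $g$ in $B$, so $\mathcal{J}(g)\neq\emptyset$, and finally the characterization $\mathcal{J}(f^m)=\overline{\bigcup_{k\ge 0}f^{-mk}(P)}$ (and likewise for $g$) together with the conjugacy relation yields $h_\infty(\mathcal{J}(f))=\mathcal{J}(g)$. Some argument of this kind must be added to your outline before it proves the stated theorem.
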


As a corollary of Theorem \ref{2.1}, we also have the following theorem which is an analogue of Theorem C in $p$-adic dynamics.

\begin{theorem}\label{2.3}
Let $p$ be a prime number, $d$ be a natural number with $d \geq 2$, and consider the polynomial maps
\begin{align*}
f_{c}(z) := 
\begin{cases}
z^d + c &(z \in \mathbb{C}_p), \\
\infty &(z = \infty).
\end{cases}
\end{align*}
where $c$ in $\mathbb{C}_p$.
Suppose that $d$ is not divided by $p$ and the parameters $c$ and $c'$ satisfy
$$
\lim_{k \rightarrow \infty} | f_{c}^{k}(0) |_p = \lim_{k \rightarrow \infty} |f_{c'}^{k}(0)|_p = \infty \quad \text{and} \quad |c - c'|_p \leq |c|_p^{1 / d}.
$$
Then there exists a homeomorphism $h : \mathcal{J}(f_c) \rightarrow \mathcal{J}(f_{c'})$ such that $h \circ f_{ c} = f_{ c'} \circ h$ on the Julia set $\mathcal{J}(f_{ c})$ of $f_c$.\end{theorem}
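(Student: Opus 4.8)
The plan is to reduce Theorem~\ref{2.3} to Theorem~\ref{2.2} (equivalently Theorem~\ref{2.1}) applied to $f_c$, after determining exactly where the two Julia sets lie. First I would rephrase the hypothesis. Because $p \nmid d$ we have $|d|_p = 1$, so $f_c'(z) = d z^{d-1}$ satisfies $|f_c'(z)|_p = |z|_p^{d-1}$. An elementary estimate shows that $\lim_{k \to \infty}|f_c^k(0)|_p = \infty$ holds precisely when $|c|_p > 1$: if $|c|_p \le 1$ then $\overline{D}_1(0)$ is forward invariant under $f_c$ and contains $0$, whereas if $|c|_p > 1$ then by induction $|f_c^k(0)|_p = |c|_p^{d^{k-1}} \to \infty$. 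So I may assume $|c|_p > 1$, and then, since $|c - c'|_p \le |c|_p^{1/d} < |c|_p$, the ultrametric inequality forces $|c'|_p = |c|_p$; in particular the same analysis applies to $f_{c'}$.

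Next I would locate both Julia sets. If $|z|_p > |c|_p^{1/d}$ then $|f_c(z)|_p = |z|_p^d > |z|_p$, while if $|z|_p < |c|_p^{1/d}$ then $|f_c(z)|_p = |c|_p > |c|_p^{1/d}$, which is the previous case; hence the forward orbit of any $z$ with $|z|_p \ne |c|_p^{1/d}$ tends to $\infty$, so $\mathcal{J}(f_c)$, and likewise $\mathcal{J}(f_{c'})$ (as $|c'|_p^{1/d} = |c|_p^{1/d}$), is contained in the sphere $S := \{\, z \in \mathbb{C}_p : |z|_p = |c|_p^{1/d}\,\}$. On $S$ one has $|f_c'(z)|_p = |z|_p^{d-1} = |c|_p^{(d-1)/d} > 1$, so $f_c$ and $f_{c'}$ are immediately expanding on $S$. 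Moreover $f_c$ has a fixed point $z^{*}$, a root of $z^d - z + c$, and a norm comparison (the same trichotomy as above) shows $|z^{*}|_p = |c|_p^{1/d}$, so its multiplier has norm $|c|_p^{(d-1)/d} > 1$ and $z^{*}$ is repelling; hence $\mathcal{J}(f_c) \ne \emptyset$.

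I would then verify the hypotheses of Theorem~\ref{2.2} for $f_c$ with $B := S$. Condition (1) is the expansion just noted, with $\lambda = |c|_p^{(d-1)/d}$. For condition (2): if $f_c(z) \in S$ then $|z^d + c|_p = |c|_p^{1/d} < |c|_p$, which forces $|z^d|_p = |c|_p$, i.e.\ $z \in S$. Condition (3) holds with $M = |c|_p^{1/d}$, and condition (4) holds with any $\delta < |c|_p^{1/d}$, since $|w - z|_p < |z|_p$ implies $|w|_p = |z|_p$. Theorem~\ref{2.2} then produces a neighborhood $U$ of $f_c$ in $Poly_d(\mathbb{C}_p)$ over which the $J$-conjugacy exists, and it remains only to check $f_{c'} \in U$.

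This last step is the heart of the matter. Since $f_c$ and $f_{c'}$ differ only in the constant coefficient, by $c' - c$, we have $|f_{c'}(z) - f_c(z)|_p = |c - c'|_p$ for every $z$, and $|c - c'|_p \le |c|_p^{1/d} = M$. I would revisit the proof of Theorem~\ref{2.2} to extract an explicit form of the stability neighborhood $U$ — of the shape ``all $g$ with $\sup_{z \in B}|g(z) - f_c(z)|_p$ below a threshold built from $\lambda$, $M$ and $\delta$'' — and confirm that the bound $|c - c'|_p \le |c|_p^{1/d}$ is exactly what places $f_{c'}$ in $U$. This quantitative matching of the explicit hypothesis to the neighborhood coming out of the proof of Theorem~\ref{2.2}, rather than any new idea, is the step I expect to be the main obstacle; once it is settled, the conclusion of Theorem~\ref{2.2} applied to $f_c$ with $g = f_{c'}$ is, by the definition of $J$-stability, precisely a homeomorphism $h : \mathcal{J}(f_c) \to \mathcal{J}(f_{c'})$ with $h \circ f_c = f_{c'} \circ h$ on $\mathcal{J}(f_c)$.
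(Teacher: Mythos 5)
Your proposal follows essentially the same route as the paper: its entire proof consists of a lemma showing that $\mathcal{J}(f_c)$ is non-empty (via a repelling fixed point, a root of $z^d-z+c$ of norm $|c|_p^{1/d}$) and contained in the sphere $\{z : |z|_p=|c|_p^{1/d}\}$, on which $f_c$ is immediately expanding, followed by an appeal to Theorem~\ref{2.2} with $B$ equal to that sphere. The quantitative matching of $|c-c'|_p\le|c|_p^{1/d}$ against the explicit stability neighborhood produced in the proof of Theorem~\ref{2.2} --- which you rightly single out as the delicate point --- is left entirely implicit in the paper, so on that step your write-up is no less complete than the original.
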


One of the differences between Theorem \ref{2.3} and Theorem C is the condition that two parameters must be close enough.
In fact, this condition can be ignored in the theory of complex dynamical systems because the set $\{ c \in \mathbb{C} \mid \lim_{k \rightarrow \infty} |f_c^k(0)| = \infty \}$ where $f_c$ is the polynomial map over $\mathbb{C}$ defined in Theorem C is path connected.
See \cite[Theorem 9.10.2]{Beard00}.
However, in the theory of $p$-adic dynamical systems, the set $\{ c \in \mathbb{C}_p \mid \lim_{k \rightarrow \infty} |f_c^k(0)|_p = \infty \}$ where $f_c$ is the polynomial maps over $\mathbb{C}_p$ defined in Theorem \ref{2.3} is totally disconnected.

\subsection*{Contents of this paper}

In the second section, we will define some notations and recall a primer on a construction of the field of $p$-adic complex numbers (subsection $2.1$), the theory of $p$-adic dynamical systems (subsections $2.2$ and $2.3$), and $p$-adic analysis (subsection $2.4$).

In the third section, we will see a key lemma and its proof (subsection $3.1$).
Moreover, we will prove Theorem \ref{2.1} (subsection $3.2$), Theorem \ref{2.2} (subsection $3.3$), and Theorem \ref{2.3} (subsections $3.4$).

In the last section, we recall the symbolic dynamical system (subsection $4.1$) and consider the correspondence between dynamics of some polynomial maps and the symbolic dynamical system as an application of the main theorems (subsection $4.2$).

\section{A primer on $p$-adic dynamical systems}

In this section, we will see a construction of the field of $p$-adic complex numbers, and the projective line over it with the chordal metric to consider $p$-adic dynamics.
We also see some facts of $p$-adic analysis. 

\subsection{The field of $p$-adic complex numbers}

\begin{definition}
Let $p$ be a prime number. 
We define {\it the $p$-adic norm on $\mathbb{Q}$} by
\begin{align*}
\left| {m \over n } \right|_p :=
\begin{cases}
0 \quad &(m = 0), \\
p^{-k} \quad &(m \neq 0)
\end{cases}
\end{align*}
where $k$ is the natural number satisfying 
$$
{m \over n} = p^{k} {m' \over n'} \quad \text{and} \quad p \nmid m', n'.
$$
\end{definition}

Considering the completion of the algebraic closure of the completion of $\mathbb{Q}$ with respect to the $p$-adic norm $|\cdot|_p$, we obtain an algebraically closed, complete field of characteristic zero. 
Let us denote it by  $\mathbb{C}_p$ and call $\mathbb{C}_p$ {\it the field of $p$-adic complex numbers}.
Moreover, the $p$-adic norm on $\mathbb{Q}$ is uniquely extended to $\mathbb{C}_p$.
We denote the extended norm on $\mathbb{C}_p$ by $|\cdot|_p$ and call it {\it the $p$-adic norm on ${\mathbb{C}}_p$}.

One of the most important properties of the $p$-adic norm on $\mathbb{C}_p$ is as follows.

\begin{proposition}\label{nonarchi}
Let $p$ be a prime number. 
For any $z$ and $w$ in $\mathbb{C}_p$, we have the inequality $|z \pm w|_p \leq \max\{|z|_p, |w|_p \}$.
Moreover, if $z$ and $w$ satisfy $|z|_p \neq |w|_p$, then $|z \pm w|_p = \max\{|z|_p, |w|_p \}$.
\end{proposition}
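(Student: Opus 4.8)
The plan is to deduce the ultrametric inequality from the single fact that every rational integer $n$ satisfies $|n|_p \le 1$, which is immediate from the definition of the $p$-adic norm on $\mathbb{Q}$ together with the fact that the norm on $\mathbb{C}_p$ restricts to that norm on $\mathbb{Q} \subset \mathbb{C}_p$. The bridge is the standard criterion: an absolute value $|\cdot|$ on a field of characteristic zero satisfies $|x+y| \le \max\{|x|,|y|\}$ for all $x,y$ if and only if $|n \cdot 1| \le 1$ for every $n \in \mathbb{Z}$. Hence, once we record that $|n|_p \le 1$ in $\mathbb{C}_p$, the first assertion follows for arbitrary $z,w \in \mathbb{C}_p$, and for $z-w$ as well, since $|-1|_p = 1$ (because $|-1|_p^{2} = |1|_p = 1$) forces $|-w|_p = |w|_p$ by multiplicativity.

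For the criterion itself I would argue as follows. The forward direction is trivial. For the converse, fix $x,y$ and expand $(x+y)^N$ by the binomial theorem; using that each $\binom{N}{k}$ is a rational integer, hence $\left|\binom{N}{k}\right|_p \le 1$, and using only the ordinary triangle inequality, one gets
\[
|x+y|_p^{N} \;\le\; \sum_{k=0}^{N} \left|\binom{N}{k}\right|_p\, |x|_p^{k}\,|y|_p^{N-k} \;\le\; (N+1)\,\bigl(\max\{|x|_p,|y|_p\}\bigr)^{N}.
\]
Taking $N$-th roots and letting $N \to \infty$, the factor $(N+1)^{1/N} \to 1$, so $|x+y|_p \le \max\{|x|_p,|y|_p\}$.

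The second assertion then follows formally from the first. Assume without loss of generality that $|z|_p > |w|_p$, and treat $z+w$ and $z-w$ simultaneously by replacing $w$ with $\mp w$. From the inequality already proved, $|z \pm w|_p \le \max\{|z|_p,|w|_p\} = |z|_p$. On the other hand $z = (z \pm w) \mp w$, so $|z|_p \le \max\{|z \pm w|_p, |w|_p\}$; since $|z|_p > |w|_p$, the maximum on the right cannot be $|w|_p$, hence it equals $|z \pm w|_p$ and $|z|_p \le |z \pm w|_p$. Combining the two inequalities gives $|z \pm w|_p = |z|_p = \max\{|z|_p,|w|_p\}$.

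I do not expect a genuine obstacle here: the only point requiring care is the appeal to the construction of $\mathbb{C}_p$, namely that its norm genuinely extends the $p$-adic norm on $\mathbb{Q}$ and in particular is bounded by $1$ on $\mathbb{Z}$; everything else is elementary and uses only the ordinary triangle inequality together with a limiting argument. (Alternatively one could establish the inequality first on $\mathbb{Q}$, then on $\mathbb{Q}_p$ by continuity, then on finite extensions $K/\mathbb{Q}_p$ via $|x| = |N_{K/\mathbb{Q}_p}(x)|_p^{1/[K:\mathbb{Q}_p]}$, and finally on $\mathbb{C}_p$ by continuity again; but the $|n|_p \le 1$ criterion makes this chain unnecessary.)
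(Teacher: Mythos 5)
Your proof is correct, but there is nothing in the paper to compare it against: the paper states Proposition \ref{nonarchi} as a known property of the $p$-adic norm and simply refers the reader to \cite{Rob00}, offering no proof of its own. Your route is the standard textbook one: reduce the ultrametric inequality to the single fact $|n|_p \le 1$ for $n \in \mathbb{Z}$ via the binomial expansion of $(x+y)^N$, the ordinary triangle inequality, and the limit $(N+1)^{1/N} \to 1$; then deduce the equality case from $z = (z \pm w) \mp w$. Both halves are complete and correct. The only point worth flagging is foundational rather than logical: your binomial criterion presupposes that the extended norm on $\mathbb{C}_p$ already satisfies the ordinary triangle inequality and restricts to $|\cdot|_p$ on $\mathbb{Q}$. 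In the usual construction, the extension of $|\cdot|_p$ to a finite extension $K/\mathbb{Q}_p$ via $|x| = |N_{K/\mathbb{Q}_p}(x)|_p^{1/[K:\mathbb{Q}_p]}$ is verified to be an absolute value precisely by first proving it is ultrametric, so a treatment that builds $\mathbb{C}_p$ from scratch would follow the alternative chain you sketch in your final parenthesis ($\mathbb{Q} \to \mathbb{Q}_p \to K \to \mathbb{C}_p$, extending the inequality by continuity at each completion step) rather than the $|n|_p \le 1$ criterion. Given that the paper takes the existence and uniqueness of the extended norm as a black box, your argument is an acceptable and self-contained verification of the proposition as stated.
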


We refer \cite{Rob00} to the readers for more details on the $p$-adic norm on $\mathbb{C}_p$. 

\subsection{The projective line over $\mathbb{C}_p$ and the chordal metric}
\begin{definition}
Let $p$ be a prime number and $\infty$ be a symbol called {\it infinity}. We define {\it the projective line over $\mathbb{C}_p$} by 
$$
\hat{\mathbb{C}}_p := \mathbb{C}_p \cup \{ \infty \}.
$$
Moreover, we define {\it the chordal metric on $\hat{\mathbb{C}}_p$} by 
\begin{align*}
\rho_p(z, w) :=
\begin{cases}
\displaystyle{|z - w |_p \over \max\{ |z|_p,1 \} \cdot \max\{ |w|_p, 1 \} } \quad &(z, w \in \mathbb{C}_p), \\
\displaystyle{1 \over \max\{ |z|_p,1 \} } \quad &(z \in \mathbb{C}_p, \quad w = \infty), \\
0 \quad &(z = w = \infty).
\end{cases}
\end{align*}
\end{definition}

One can check that the chordal metric $\rho_p$ satisfies the axioms of metric. See \cite[Proposition$2.4$]{Silv07} for more details.

Let us fix a prime number $p$ and use the following notations throughout this paper. 
\begin{align*}
&|\cdot| = |\cdot|_p, \quad \rho = \rho_p, \\
&{D}_r(a) = \{ z \in \mathbb{C}_p \mid |z - a|_p < r \}, \\
&\overline{D}_r(a) = \{ z \in \mathbb{C}_p \mid |z - a|_p \leq r \}, \\
&\mathbb{C}_p^{\times} = \mathbb{C}_p \backslash \{ 0 \}, \\
&|\mathbb{C}_p^{\times}| = \{ |z|_p \in \mathbb{R} \mid z \in \mathbb{C}_p^{\times} \}.
\end{align*}

In particular, we call a set $A$ in $\mathbb{C}_p$ {\it a closed (resp. open) disk of $\mathbb{C}_p$} if there exist an $a \in \mathbb{C}_p$ and an $r > 0$ such that $A = \overline{D}_r(a)$ (resp. ${D}_r(a))$.
Let us recall some properties of disks of $\mathbb{C}_p$:
The first property is that {\it the closed unit disk} $\overline{D}_1(0)$ is open, closed, and non-compact in $\mathbb{C}_p$.
In particular, this implies that $\hat{\mathbb{C}}_p$ is totally disconnected and non-compact.
The second property is that if two distinct disks in $\mathbb{C}_p$ have a non-empty intersection, one of the disks must be contained in the other. 
In particular, this implies that the intersection of two disks in $\mathbb{C}_p$ is also a disk in $\mathbb{C}_p$.
The third property is that every bounded sets in $\mathbb{C}_p$ is closed with respect to $|\cdot|$ if and only if it is closed with respect to $\rho$.

\subsection{The Fatou set and the Julia set}

Given a polynomial map $f : \hat{\mathbb{C}}_p \rightarrow \hat{\mathbb{C}}_p$ with $f(\infty) = \infty$ over $\mathbb{C}_p$, we define {\it the Fatou set $\mathcal{F}(f)$ of $f$} by the largest open subset of $\hat{\mathbb{C}}_p$ on which the family $\{ f^n \}_{n \in \mathbb{N}}$ of iterated polynomial maps is equicontinuous.
Moreover, we call the complement of $\mathcal{F}(f)$ {\it the Julia set $\mathcal{J}(f)$ of $f$}.
Note that the Fatou set of a polynomial map contains $\infty$ so the Julia set must be bounded.
We refer \cite{Silv07} to readers for a general definition of the Fatou set and the Julia set in $p$-adic dynamical systems.
It is well-known that $\mathcal{F}(f)$ and $\mathcal{J}(f)$ are {\it totally invariant under $f$}, that is, 
$$
f(\mathcal{F}(f)) = f^{-1}(\mathcal{F}(f))= \mathcal{F}(f) \quad \text{and} \quad f(\mathcal{J}(f)) = f^{-1}(\mathcal{J}(f)) = \mathcal{J}(f).
$$
Moreover, one can show that 
$$
\mathcal{F}(f) = \mathcal{F}(f^k) \quad \text{and} \quad \mathcal{J}(f) = \mathcal{J}(f^k)
$$
for any $k \in \mathbb{N}$.
See \cite[Proposition $5.18$]{Silv07} for the proof.

Unlike the theory of complex dynamical systems, we can easily find an example of a polynomial map whose Julia set is empty. 
For example, one can show that $\mathcal{J}(z^d)$ is empty where $d$ is a natural number.

In the theory of complex dynamical systems, Montel's theorem is well known as one of the most useful theorem to determine equicontinuity of a given family of rational maps over $\mathbb{C}$.
See \cite[Theorem $3.7$]{Miln06}.
By applying Montel's theorem, one can obtain some properties of the Julia sets of rational maps such as the facts that Julia set is uncountable and has no isolated points.

In the theory of $p$-adic dynamical systems, L-C. Hsia proved an analogue of Montel's theorem for families of rational maps over $\mathbb{C}_p$.
See \cite[Theorem $5.27$]{Silv07}.
We do not directly use Montel's theorem to prove our main theorem but we will use the following properties of the Julia sets that can be shown by Montel's theorem.

\begin{theorem}\label{montel}
Let $f : \hat{\mathbb{C}}_p \rightarrow \hat{\mathbb{C}}_p$ be a polynomial map of degree $d \geq 2$ over $\mathbb{C}_p$.
Then for any $P$ in the Julia set $\mathcal{J}(f)$ of $f$, we have
$$
\mathcal{J}(f) = \overline{\bigcup_{k \geq 0} f^{-k}(P) }.
$$
Moreover, if a non-empty and closed subset $B$ of $\mathbb{C}_p$ satisfies $f^{-1}(B) \subset B$, then $\mathcal{J}(f)$ is contained in $B$.
\end{theorem}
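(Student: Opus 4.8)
The plan is to reduce both assertions to one density statement proved with the $p$-adic Montel theorem \cite[Theorem $5.27$]{Silv07}: for every finite point $Q \in \mathbb{C}_p$, provided $\mathcal{J}(f) \neq \emptyset$, the backward orbit $\bigcup_{k \geq 0} f^{-k}(Q)$ is dense in $\mathcal{J}(f)$, that is, $\mathcal{J}(f) \subset \overline{\bigcup_{k \geq 0} f^{-k}(Q)}$. The preliminary observation that makes "$Q$ finite" sufficient is that a finite exceptional point — a finite point with finite backward orbit — forces $f$ to be affinely conjugate to $z \mapsto z^d$: together with the totally invariant point $\infty$ it produces a finite completely invariant set, which must be totally ramified, so that $f^{-1}(e) = \{ e \}$ and $f(z) = a(z-e)^d + e$; since $\mathcal{J}(z^d)$ is empty this would give $\mathcal{J}(f) = \emptyset$. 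Hence once $\mathcal{J}(f) \neq \emptyset$ there are no finite exceptional points, and the entire discussion takes place among finite non-exceptional points.

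To prove the density statement I would fix $x \in \mathcal{J}(f)$ and an arbitrary open neighborhood $U$ of $x$. The family $\{ f^n \}_{n \in \mathbb{N}}$ cannot be equicontinuous on $U$, for otherwise $U$ would be an open set of equicontinuity and hence contained in the Fatou set, contradicting $x \in \mathcal{J}(f)$. Applying the $p$-adic Montel theorem to $\{ f^n|_U \}_n$, the union $\bigcup_{n \geq 0} f^n(U)$ omits at most the exceptional points, which under $\mathcal{J}(f) \neq \emptyset$ reduce to $\{ \infty \}$ by the observation above. Thus the finite point $Q$ lies in some $f^N(U)$, giving a $y \in U$ with $f^N(y) = Q$, so $U$ meets $\bigcup_{k \geq 0} f^{-k}(Q)$; as $U$ was arbitrary, $x \in \overline{\bigcup_{k \geq 0} f^{-k}(Q)}$.

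The first assertion then follows by taking $Q = P \in \mathcal{J}(f)$: the density statement gives one inclusion, while for the reverse I would invoke the total invariance $f^{-1}(\mathcal{J}(f)) = \mathcal{J}(f)$ recalled earlier, whence $f^{-k}(P) \subset \mathcal{J}(f)$ for all $k$ by induction and, $\mathcal{J}(f)$ being closed, $\overline{\bigcup_{k \geq 0} f^{-k}(P)} \subset \mathcal{J}(f)$, yielding equality. For the second assertion I may assume $\mathcal{J}(f) \neq \emptyset$ (otherwise it is trivial) and pick any $Q \in B \subset \mathbb{C}_p$, which is then non-exceptional. Iterating $f^{-1}(B) \subset B$ yields $f^{-k}(B) \subset B$ for every $k$, hence $\bigcup_{k \geq 0} f^{-k}(Q) \subset B$ and, $B$ being closed, $\overline{\bigcup_{k \geq 0} f^{-k}(Q)} \subset B$; combined with the density statement this gives $\mathcal{J}(f) \subset B$.

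The main obstacle I anticipate is the careful use of the $p$-adic Montel theorem: in contrast with the archimedean case, one must pin down the exact shape of the omitted (exceptional) set in the non-archimedean setting and confirm that it contains no finite point once $\mathcal{J}(f) \neq \emptyset$. The classification of polynomials possessing a finite exceptional point — reducing them to $z^d$, whose Julia set is empty as already noted — is the only genuinely non-formal ingredient beyond Montel and the total invariance of $\mathcal{J}(f)$, and it is precisely what legitimizes the step that $\bigcup_{n \geq 0} f^n(U)$ omits at most $\{ \infty \}$.
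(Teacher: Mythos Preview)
Your proof is correct. Note, however, that the paper does not give its own proof of this statement: it simply cites \cite{Hs00} and \cite[Proposition~5.30 and Corollary~5.32]{Silv07}, together with the remark that a finite point $Q$ with $f^{-1}(\{Q\}) = \{Q\}$ forces $\mathcal{J}(f) = \emptyset$. Your argument is precisely the standard derivation from the $p$-adic Montel theorem that underlies those references, so there is nothing to distinguish methodologically. The one place you could be a bit more explicit is the sentence ``the union $\bigcup_{n \geq 0} f^n(U)$ omits at most the exceptional points'': the reasoning is that any omitted point $Q$ has its entire backward orbit $\bigcup_k f^{-k}(Q)$ omitted as well, so if that backward orbit contained two distinct points Montel would force equicontinuity on $U$; hence $f^{-1}(\{Q\}) = \{Q\}$, i.e.\ $Q$ is exceptional. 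You clearly have this in mind, but spelling it out would make the appeal to Montel airtight.
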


Note that if $f$ has a point $Q$ in $\mathbb{C}_p$ satisfying $f^{-1}(\{ Q \} ) = \{ Q \}$, then $f$ has no Julia set.
We refer \cite{Hs00} and \cite[Proposition $5.30$ and Corollary $5.32$]{Silv07} to readers.

\subsection{$p$-adic analysis}

In the field of $p$-adic complex numbers, it is easier to find zeros of polynomial maps than in the field of complex number.
See \cite{Rob00} or \cite[Theorem $5.11$]{Silv07} for proofs of the following theorem.

\begin{theorem}\label{weierstrass}
Let $f : \hat{\mathbb{C}}_p \rightarrow \hat{\mathbb{C}}_p$ be a polynomial map of degree $d \geq 1$ over $\mathbb{C}_p$ with the expansion
$$
f(z) = c_0 + c_1 (z - z_0) + \cdots + c_d (z - z_0)^d
$$ 
where $\{ c_k \}_{k = 0}^{d}$ in $\mathbb{C}_p$ with $c_d \neq 0$ and $z_0$ in $\mathbb{C}_p$.
Let $r$ be a positive real number and set 
$$
l = \max\{ l' \in \{ 0, 1, \cdots, d \} \mid | c_{l'} | \cdot r^{l'} \geq | c_{k} | \cdot r^{k} \text{ for all } k \in \{ 0, 1, \cdots, d \}  \}.
$$
Then there exist $l$ elements $\alpha_1, \alpha_2, \cdots, \alpha_l$ in $\overline{D}_r(z_0)$ and a polynomial map $g : \hat{\mathbb{C}}_p \rightarrow \hat{\mathbb{C}}_p$ over $\mathbb{C}_p$ such that $g$ has no zeros in $\overline{D}_r(z_0)$ and 
$$
f(z) = g(z) \cdot \prod_{k = 1}^{l}(z - \alpha_k).
$$
In particular, if $l = 0$, $f$ has no zeros in $\overline{D}_r(z_0)$.
\end{theorem}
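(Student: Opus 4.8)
The final statement is the $p$-adic Weierstrass factorization theorem, and I would prove it by counting, with multiplicity, the zeros of $f$ inside the closed disk $\overline{D}_r(z_0)$ through a multiplicative ``weight'' attached to the radius $r$. First I would reduce to the case $z_0 = 0$: setting $w = z - z_0$ and $P(w) := f(z_0 + w) = \sum_{k=0}^d c_k w^k$, a point $\alpha \in \overline{D}_r(z_0)$ is a zero of $f$ if and only if $\alpha - z_0 \in \overline{D}_r(0)$ is a zero of $P$, so it suffices to locate the zeros of $P$ in $\overline{D}_r(0)$. Since $\mathbb{C}_p$ is algebraically closed, I may write $P(w) = c_d \prod_{j=1}^d (w - \beta_j)$ with $\beta_1, \ldots, \beta_d \in \mathbb{C}_p$ listed with multiplicity. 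For a nonzero polynomial $Q(w) = \sum_k a_k w^k$ I would introduce the weight $\| Q \|_r := \max_k |a_k| r^k$ and the index $\lambda_r(Q) := \max\{ k : |a_k| r^k = \| Q \|_r \}$, the largest index attaining the weight; by definition $\lambda_r(P)$ is exactly the integer $l$ in the statement. The theorem then amounts to the identity $\lambda_r(P) = \#\{ j : |\beta_j| \leq r \}$.

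For the constant $c_d$ one has $\lambda_r(c_d) = 0$, and for a monic linear factor $w - \beta$ a direct inspection of $\max\{ r, |\beta| \}$ gives $\lambda_r(w - \beta) = 1$ when $|\beta| \leq r$ and $\lambda_r(w - \beta) = 0$ when $|\beta| > r$ (the boundary case $|\beta| = r$ falling on the side $\lambda_r = 1$, consistent with $\beta$ lying in the closed disk). Hence the desired identity follows at once if I can establish that $\lambda_r$ is additive on products, i.e. $\lambda_r(QR) = \lambda_r(Q) + \lambda_r(R)$ for nonzero $Q, R$; applying this to $P = c_d \prod_j (w - \beta_j)$ yields $\lambda_r(P) = \sum_j \lambda_r(w - \beta_j) = \#\{ j : |\beta_j| \leq r \}$, as wanted.

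The heart of the argument, and the step I expect to be the main obstacle, is this additivity, which I would prove by a dominant-term analysis resting on Proposition \ref{nonarchi}. Writing $m := \lambda_r(Q)$ and $n := \lambda_r(R)$, I would examine the coefficient of $w^{m+n}$ in $QR$: among the products $a_i b_j$ with $i + j = m + n$, the term $a_m b_n$ has weight $\| Q \|_r \| R \|_r$, whereas every other term is strictly smaller, since $i > m$ forces $|a_i| r^i < \| Q \|_r$ (here it is essential that $\lambda_r$ selects the \emph{largest} attaining index), while $i < m$ forces $j > n$ and $|b_j| r^j < \| R \|_r$. By the strict form of the ultrametric inequality in Proposition \ref{nonarchi}, the coefficient of $w^{m+n}$ then has weight exactly $\| Q \|_r \| R \|_r$. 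The same comparison shows that every coefficient of $QR$ has weight at most $\| Q \|_r \| R \|_r$, and that for any index exceeding $m+n$ each contributing product is strictly smaller; hence $\| QR \|_r = \| Q \|_r \| R \|_r$ and $\lambda_r(QR) = m + n$. With additivity in hand I would set $\{ \alpha_1, \ldots, \alpha_l \}$ to be the roots $\beta_j$ with $|\beta_j| \leq r$ and $g(z) := c_d \prod_{j : |\beta_j| > r} (z - \beta_j)$; after translating back, $g$ has no zero in $\overline{D}_r(z_0)$ and $f(z) = g(z) \prod_{k=1}^l (z - \alpha_k)$, with the degenerate cases $l = 0$ (empty product, $g = f$) and $l = d$ (constant $g = c_d$) handled automatically.
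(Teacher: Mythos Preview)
Your argument is correct. The reduction to $z_0 = 0$, the identification of $l$ with the Gauss-norm index $\lambda_r(P)$, and the multiplicativity $\lambda_r(QR) = \lambda_r(Q) + \lambda_r(R)$ via the dominant-term analysis (using the strict case of Proposition \ref{nonarchi}) are all sound; applying this to the linear factorization $P = c_d \prod_j (w - \beta_j)$ indeed yields $l = \#\{j : |\beta_j| \leq r\}$, and translating back gives the stated factorization. The only cosmetic point is that in the last paragraph your $\alpha_k$ and the roots of $g$ should be the translates $z_0 + \beta_j$, as you acknowledge with ``after translating back''.

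As for comparison: the paper does not supply its own proof of this theorem; it is quoted as background and the reader is referred to \cite{Rob00} and \cite[Theorem 5.11]{Silv07}. Those references prove the stronger Weierstrass preparation theorem for convergent power series, typically via a Euclidean-type division argument that does not presuppose a linear factorization. Your route is genuinely different and more direct for the polynomial case at hand: because $\mathbb{C}_p$ is algebraically closed you can factor $P$ completely from the outset and then merely \emph{count} which roots lie in the disk, using the multiplicativity of $\lambda_r$. This buys simplicity and transparency for polynomials, at the cost of not extending verbatim to power series (where one cannot write down all roots in advance). Either approach is perfectly adequate for the use the paper makes of the result.
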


\begin{proposition}\label{open}
Let $f : \hat{\mathbb{C}}_p \rightarrow \hat{\mathbb{C}}_p$ be a polynomial map of degree $d \geq 1$ over $\mathbb{C}_p$.
\begin{enumerate}

\item
The polynomial map $f$ is continuous and open on $\hat{\mathbb{C}}_p$.

\item
If $f$ has no zeros in $\overline{D}_r(z_0)$, then
$$
|f(z)| = |f(z_0)| > 0
$$
for all $z$ in $\overline{D}_r(z_0)$.
\end{enumerate}
\end{proposition}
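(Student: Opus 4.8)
The plan is to base both assertions on the Newton-polygon factorization of Theorem \ref{weierstrass} together with the ultrametric inequality of Proposition \ref{nonarchi}. Writing $f(z) = c_0 + c_1(z-z_0) + \cdots + c_d(z-z_0)^d$ with $c_0 = f(z_0)$ and $c_d \neq 0$, the index $l$ produced by Theorem \ref{weierstrass} counts the zeros of $f$ in $\overline{D}_r(z_0)$; the two cases $l = 0$ and $l \geq 1$ yield part (2) and the openness in part (1), respectively. Continuity is routine and is treated last, together with the point at infinity.

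For part (2), I would first observe that if $f$ has no zeros in $\overline{D}_r(z_0)$ then the index $l$ of Theorem \ref{weierstrass} must be $0$: otherwise the roots $\alpha_1, \dots, \alpha_l \in \overline{D}_r(z_0)$ produced there would be zeros of $f$ inside the disk. The defining maximality of $l$ then forces $|c_0| > |c_k|\, r^k$ for every $k \in \{1, \dots, d\}$. Hence for any $z \in \overline{D}_r(z_0)$ one has $|c_k (z-z_0)^k| \leq |c_k|\, r^k < |c_0|$ for $k \geq 1$, so $|f(z) - f(z_0)| = \bigl|\sum_{k \geq 1} c_k (z-z_0)^k\bigr| < |c_0|$ by Proposition \ref{nonarchi}. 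Since $z_0$ is not a zero, $|c_0| = |f(z_0)| > 0$, and the strict inequality $|f(z) - f(z_0)| < |f(z_0)|$ gives $|f(z)| = |f(z_0)|$ by the second part of Proposition \ref{nonarchi}.

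For the openness in part (1), let $U$ be open and $z_0 \in U$; it suffices to exhibit an open neighborhood of $f(z_0)$ inside $f(U)$. Choose $r > 0$ with $\overline{D}_r(z_0) \subset U$ and let $m$ be the least positive index with $c_m \neq 0$ (which exists since $\deg f \geq 1$). For $w \in \mathbb{C}_p$ with $|w - f(z_0)| \leq |c_m|\, r^m$, apply Theorem \ref{weierstrass} to the polynomial $f - w$, whose constant coefficient is $c_0 - w$ with $|c_0 - w| \leq |c_m|\, r^m$: the maximum of the quantities $|c_k|\, r^k$ is then attained at some index $\geq 1$, so the corresponding index $l \geq 1$ and $f - w$ has a zero in $\overline{D}_r(z_0) \subset U$. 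Thus $\overline{D}_{|c_m| r^m}(f(z_0)) \subset f(U)$, and since closed disks of $\mathbb{C}_p$ are open this is the desired neighborhood; as $z_0$ was arbitrary, $f(U)$ is open.

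It remains to handle continuity and the point at infinity. Continuity of $f$ on $\mathbb{C}_p$ with respect to $|\cdot|$ is immediate from the ultrametric estimate above, and it transfers to the chordal metric $\rho$ because $\rho$ and $|\cdot|$ induce the same topology on bounded subsets of $\mathbb{C}_p$; at $\infty$ we have $f(\infty) = \infty$, and leading-term domination gives $|f(z)| \to \infty$ as $|z| \to \infty$, i.e. $\rho(f(z), \infty) \to 0$. For openness at $\infty$, a chordal neighborhood of $\infty$ has the form $\{\, |z| > R \,\} \cup \{\infty\}$, and leading-term domination again shows that every solution of $f(z) = w$ satisfies $|z| > R$ once $|w|$ is large enough, so the image contains a neighborhood $\{\, |w| > R' \,\} \cup \{\infty\}$ of $\infty$. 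I expect the main obstacle to be precisely this bookkeeping at infinity and the comparison between $\rho$ and $|\cdot|$; the substantive content over $\mathbb{C}_p$ is clean once one notices that dropping the absolute value of the constant coefficient below $|c_m|\, r^m$ forces the Newton-polygon index to jump from $0$ to a positive value.
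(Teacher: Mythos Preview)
Your argument is correct. Note, however, that the paper does not actually prove Proposition~\ref{open}: it simply refers the reader to \cite[Corollary~5.17 and Theorem~5.13]{Silv07}. So there is no ``paper's own proof'' to compare against; you have supplied a self-contained argument where the author chose to outsource one.

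That said, your route is the natural one and matches how such statements are typically proved (and, in spirit, how Silverman proves them): the Newton-polygon index $l$ of Theorem~\ref{weierstrass} governs both parts. For part~(2) you correctly observe that $l=0$ forces $|c_0| > |c_k|r^k$ for all $k \ge 1$, whence the strict ultrametric inequality gives $|f(z)| = |f(z_0)|$. For openness on $\mathbb{C}_p$ the trick of lowering the constant term below $|c_m|r^m$ to force $l \ge 1$ is exactly right. Two small points worth tightening in a final write-up: (i) in the openness argument at $\infty$, the step ``so the image contains $\{|w| > R'\} \cup \{\infty\}$'' tacitly uses that $f(z)=w$ always has a solution in $\mathbb{C}_p$, i.e.\ algebraic closedness---worth saying explicitly; (ii) the claim that $\rho$ and $|\cdot|$ induce the same topology on bounded sets is stated in the paper only for closedness, not openness, though of course one implies the other. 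Neither is a genuine gap.
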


See \cite[Corollary $5.17$ and Theorem $5.13$]{Silv07} for the proof. 
The following corollary was proven by R. Benedetto in \cite{Ben02} and it will be helpful to determine whether a given polynomial map is bijective or not.

\begin{corollary}\label{bene}
Let $f : \hat{\mathbb{C}}_p \rightarrow \hat{\mathbb{C}}_p$ be a polynomial map of degree $d \geq 1$ over $\mathbb{C}_p$, $\alpha$ be an element in $\mathbb{C}_p$, and $r$ be a positive real number.
The polynomial map $f$ is bijective from $\overline{D}_r(\alpha)$ to $\overline{D}_s(f(\alpha))$ if and only if 
$$
|f(z) - f(w)| = {s \over r} \cdot |z - w|
$$ 
for any $z$ and $w$ in $\overline{D}_r(\alpha)$.
\end{corollary}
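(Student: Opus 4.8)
The plan is to reduce the bijectivity statement to a single Newton-polygon (dominance) condition on the Taylor coefficients of $f$ at $\alpha$, and then to read off both the scaling identity and the bijectivity from that condition by means of the Weierstrass-type zero count of Theorem \ref{weierstrass}. Write the expansion $f(z) = c_0 + c_1(z-\alpha) + \cdots + c_d(z-\alpha)^d$, so that $c_0 = f(\alpha)$. I would organise the proof around the equivalence of three statements: (A) $f$ maps $\overline{D}_r(\alpha)$ bijectively onto $\overline{D}_s(f(\alpha))$; (B) $|f(z)-f(w)| = (s/r)\,|z-w|$ for all $z,w \in \overline{D}_r(\alpha)$; and (C) $s = |c_1|\,r$ together with $|c_1|\,r > |c_k|\,r^k$ for every $k \in \{2, \dots, d\}$.

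The analytic heart is an elementary quotient estimate. For distinct $z,w \in \overline{D}_r(\alpha)$, writing $u = z-\alpha$, $v = w-\alpha$ and using $(u^k - v^k)/(u-v) = \sum_{j=0}^{k-1} u^j v^{k-1-j}$ together with the ultrametric inequality of Proposition \ref{nonarchi}, one obtains $\bigl| (f(z)-f(w))/(z-w) - c_1 \bigr| \le \max_{2 \le k \le d} |c_k|\,r^{k-1}$. Hence, assuming (C), the correction term is strictly smaller than $|c_1|$, so Proposition \ref{nonarchi} forces $|f(z)-f(w)| = |c_1|\,|z-w| = (s/r)\,|z-w|$; this is (C)$\Rightarrow$(B), and injectivity on $\overline{D}_r(\alpha)$ follows at once. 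For surjectivity onto the full closed disk I would fix any $\beta$ with $|\beta - f(\alpha)| \le s$ and apply Theorem \ref{weierstrass} to $f(z)-\beta$ expanded at $\alpha$: its constant coefficient satisfies $|c_0 - \beta| \le s = |c_1|\,r$, its linear coefficient contributes exactly $|c_1|\,r = s$, and every higher coefficient contributes $|c_k|\,r^k < s$; thus the index $l$ of Theorem \ref{weierstrass} equals $1$, so $f(z)=\beta$ has exactly one solution in $\overline{D}_r(\alpha)$. This proves (C)$\Rightarrow$(A) and simultaneously pins the image down to $\overline{D}_{|c_1|r}(f(\alpha))$.

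For the converse I would show that failure of (C) destroys injectivity, which handles both (A)$\Rightarrow$(C) and (B)$\Rightarrow$(C) at once. Suppose the dominance in (C) fails, and let $m \ge 2$ be the largest index attaining $M := \max_{1 \le k \le d} |c_k|\,r^k$ (such an $m \ge 2$ must exist, since failure means $|c_1|\,r \le |c_k|\,r^k$ for some $k \ge 2$). Applying Theorem \ref{weierstrass} to $g(z) - \beta$, where $g = f - f(\alpha)$ and $|\beta| < M$, the index there is again $m \ge 2$, so $g(z)=\beta$ has $m \ge 2$ roots in $\overline{D}_r(\alpha)$ counted with multiplicity; choosing $\beta$ away from the finitely many critical values makes these roots distinct, contradicting injectivity. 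Once (C) is known, the quotient estimate forces the scaling constant to be $|c_1|$, and comparison of the image $\overline{D}_{|c_1|r}(f(\alpha))$ with the prescribed target $\overline{D}_s(f(\alpha))$ yields $s = |c_1|\,r$, closing the loop.

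The step I expect to be the main obstacle is this converse, namely extracting genuine non-injectivity from a Weierstrass count that only controls the number of zeros with multiplicity: the statement ``$l \ge 2$ zeros'' must be upgraded to ``two distinct preimages.'' I would handle this either by perturbing $\beta$ to avoid the finitely many values over which $g$ ramifies, or by arguing directly that a multiple zero $z_0$ of $g-\beta$ forces $g$ to be at least two-to-one on a small disk about $z_0$ via a further application of Theorem \ref{weierstrass}. The remaining points requiring care are verifying that $s$ genuinely equals $|c_1|\,r$ on both sides of the equivalence, and checking that surjectivity reaches the boundary sphere $|\beta - f(\alpha)| = s$ — the case in which the constant term also attains the maximum, where one must invoke that $l$ is defined as the \emph{largest} maximising index in Theorem \ref{weierstrass} in order to conclude $l=1$.
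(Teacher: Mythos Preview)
The paper does not actually give a proof of this corollary: it attributes the result to Benedetto \cite{Ben02} and states it without argument. So there is no ``paper's own proof'' to compare against, and your proposal should be read as a self-contained verification.

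Your argument is sound. The three-way equivalence through the Newton-polygon condition (C) is the natural route, and each implication goes through as you describe. The quotient estimate $\bigl|(f(z)-f(w))/(z-w)-c_1\bigr|\le\max_{k\ge 2}|c_k|r^{k-1}$ is exactly right and immediately gives (C)$\Rightarrow$(B); the Weierstrass count with index $l=1$ cleanly handles surjectivity, including the boundary case you flag (since $l$ is the \emph{largest} maximising index, a tie between the constant and linear terms still returns $l=1$). For the converse, your perturbation of $\beta$ away from the finitely many critical values is entirely adequate: the open disk $\{|\beta|<M\}$ is infinite, the critical values are finite in number, so a generic $\beta$ yields $m\ge 2$ simple, hence distinct, preimages in $\overline{D}_r(\alpha)$, killing injectivity and therefore both (A) and (B). The residual bookkeeping---recovering $s=|c_1|r$ from either (A) or (B) once dominance is known---works because in $\mathbb{C}_p$ the value group is dense, so two closed disks with the same centre coincide only when their radii agree.

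One small point of presentation: when you argue (A)$\Rightarrow$(C), you use that the image disk determines its radius uniquely. You might state explicitly that density of $|\mathbb{C}_p^\times|$ in $\mathbb{R}_{>0}$ is what makes $\overline{D}_{s_1}(a)=\overline{D}_{s_2}(a)$ force $s_1=s_2$; over a discretely valued field this step would require more care.
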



\section{A key lemma and proofs of main theorems}

In this section, we will introduce a key lemma and prove it. 
We also prove Theorem \ref{2.1}, Theorem \ref{2.2}, and Theorem \ref{2.3}.

\subsection{A key lemma}
\begin{lemma}\label{3.2}
Let $f : \hat{\mathbb{C}}_p \rightarrow \hat{\mathbb{C}}_p$ be a polynomial map of degree $d \geq 2$ over $\mathbb{C}_p$.
Suppose that there exists a non-empty set $B$ in $\mathbb{C}_p$ satisfying the following conditions.
\begin{enumerate}
\item The polynomial map $f$ is immediately expanding on $B$.
\item The set $B$ has no critical values of $f$, that is, 
$$
B \cap f(\{ z \in \mathbb{C}_p \mid f'(z) = 0 \}) = \emptyset.
$$
\item The set $B$ is closed with respect to the choral metric.
\end{enumerate}

Then for any $w \in B$, there exist $d$ distinct elements $z_1, z_2, \cdots, z_d $ in $\mathbb{C}_p$ such that for each $k$ in $\{1, 2, \cdots, d \}$, $f(z_k) = w$.
Moreover, there exists a positive real number $\mu$ such that for any $w \in B$ and $r \in [0, \mu] \cap |\mathbb{C}_p^{\times}|$, 
$$
f | \overline{D}_{R_k}(z_k) \rightarrow \overline{D}_{r}(w)
$$
is a homeomorphism for each $k$ in $\{1, 2, \cdots, d \}$ and
$$
f^{-1}(\overline{D}_r(w)) = \bigsqcup_{k=1}^{d}\overline{D}_{R_k}(z_k)
$$
where $R_k : = r / |f'(z_k)| < r$.

\end{lemma}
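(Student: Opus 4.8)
The plan is to exploit the non-archimedean structure together with the Weierstrass-type factorization (Theorem~\ref{weierstrass}) and Benedetto's criterion (Corollary~\ref{bene}). Fix $w \in B$. First I would establish the existence of the $d$ distinct preimages: by Theorem~\ref{weierstrass} applied to $f(z) - w$ with a sufficiently large radius $r$ and base point $z_0 = 0$, the number of zeros counted with multiplicity in $\overline{D}_r(0)$ equals $d$ once $r$ is large enough that the leading term dominates; thus $f^{-1}(w)$ consists of $d$ points counted with multiplicity. The multiplicities are all $1$: if some $z_k$ were a multiple root of $f(z)-w$, then $f'(z_k) = 0$, so $z_k$ would be a critical point of $f$ and $w = f(z_k)$ a critical value, contradicting condition~(2) that $B$ contains no critical values. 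Hence there are exactly $d$ distinct preimages $z_1,\dots,z_d$.

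Next I would produce the uniform radius $\mu$. For each $z_k$, since $f'(z_k)\neq 0$, the Taylor expansion of $f$ at $z_k$ reads $f(z) = w + f'(z_k)(z-z_k) + (\text{higher order})$, and there is a radius $\varepsilon_k > 0$ on which the linear term strictly dominates all higher terms, so that $|f(z)-f(z')| = |f'(z_k)|\,|z-z'|$ for $z,z' \in \overline{D}_{\varepsilon_k}(z_k)$; by Corollary~\ref{bene}, $f$ maps $\overline{D}_{\varepsilon_k}(z_k)$ bijectively onto $\overline{D}_{|f'(z_k)|\varepsilon_k}(w)$. The subtle point is that I need a single $\mu$ that works simultaneously for \emph{every} $w \in B$, not just one; this is where conditions~(1) and~(3) enter. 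Condition~(1) gives a uniform lower bound $|f'(z_k)| \geq \lambda > 1$ on $B$ (and hence on the preimage points, provided I also control how large the preimage points can be — here I would use that $B$ is bounded in the chordal metric, so $B$ lies in some $\overline{D}_M(0)$, forcing the preimages $z_k$ into a fixed bounded disk by the dominance of the leading coefficient). On that fixed bounded region the higher-order Taylor coefficients of $f$ at any point are uniformly bounded, so the threshold radius $\varepsilon_k$ on which linearity holds can be taken to be a single constant $\mu$ independent of $w$ and $k$. I expect this uniformity argument to be the main obstacle, and the cleanest route is: bound everything inside one big disk, expand $f$ there, and read off the uniform $\mu$ from the (finitely many, uniformly bounded) coefficients.

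Finally, for $w \in B$ and $r \in [0,\mu]\cap|\mathbb{C}_p^\times|$, set $R_k := r/|f'(z_k)| < r$ (the strict inequality is immediate from $|f'(z_k)| > 1$). By the bijectivity just established and rescaling, $f$ restricts to a homeomorphism $\overline{D}_{R_k}(z_k) \to \overline{D}_r(w)$ for each $k$. It remains to check $f^{-1}(\overline{D}_r(w)) = \bigsqcup_{k=1}^d \overline{D}_{R_k}(z_k)$. The inclusion $\supseteq$ follows since each disk maps into $\overline{D}_r(w)$. For disjointness, the $z_k$ are distinct, so for $r$ small (hence $R_k$ small, again using $r \leq \mu$) the disks $\overline{D}_{R_k}(z_k)$ are pairwise disjoint by the ultrametric property that two disks are nested or disjoint; shrinking $\mu$ once more if necessary guarantees this. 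For $\subseteq$, suppose $f(u) \in \overline{D}_r(w)$; apply Theorem~\ref{weierstrass} to $f(z)-w$ on the disk $\overline{D}_r(u)$ — wait, more directly: the total number of preimages of any point of $\overline{D}_r(w)$ is $d$ counted with multiplicity (degree of $f$), and we have already exhibited $d$ of them, one in each $\overline{D}_{R_k}(z_k)$, via the homeomorphisms; since $\sum_k \deg(f|_{\overline{D}_{R_k}(z_k)}) = d$, there is no room for further preimages outside $\bigsqcup_k \overline{D}_{R_k}(z_k)$. This gives equality and completes the proof. $\square$
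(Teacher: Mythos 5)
Your overall architecture matches the paper's: distinct preimages from the absence of critical values, domination of the linear term of the Taylor expansion at each preimage so that $|f(x)-f(y)| = |f'(z_k)|\,|x-y|$ on a small disk, Corollary~\ref{bene} for bijectivity, and a degree count for $f^{-1}(\overline{D}_r(w)) = \bigsqcup_k \overline{D}_{R_k}(z_k)$. The problem is the step you yourself flag as the main obstacle: the uniformity of $\mu$. Your argument for it rests on the claim that $B$ is ``bounded in the chordal metric, so $B$ lies in some $\overline{D}_M(0)$.'' That is vacuous: every subset of $\hat{\mathbb{C}}_p$ has chordal diameter at most $1$, so chordal boundedness gives no norm bound, and the lemma's hypotheses (expansion, no critical values, closedness) do not force $B$ to be norm-bounded. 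Without $B \subset \overline{D}_M(0)$ the preimage points $z_k$ need not lie in a fixed disk, the higher Taylor coefficients $f^{(l)}(z_k)/l!$ are not uniformly bounded, and your uniform $\mu$ evaporates. A related slip: you invoke condition (1) to get $|f'(z_k)| \geq \lambda$ ``on the preimage points,'' but the $z_k$ lie in $f^{-1}(B)$, not necessarily in $B$, and the lemma does not assume $f^{-1}(B) \subset B$; this lower bound (which you also need for $R_k < r$) does not follow from the stated hypotheses.

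The paper's proof avoids both issues. Since the finitely many critical points lie in the open set $\mathbb{C}_p \setminus f^{-1}(B)$, there is a single $\delta > 0$ with $\overline{D}_{\delta}(c) \cap f^{-1}(B) = \emptyset$ for every critical point $c$; hence $f'$ has no zero in $\overline{D}_{\delta}(z_0)$ for any $z_0 \in f^{-1}(B)$, and Theorem~\ref{weierstrass} applied to $f'$ on that disk yields directly $|l|\cdot|f^{(l)}(z_0)/l!|\cdot\delta^{l-1} < |f'(z_0)|$ for all $l \geq 2$ --- coefficient domination \emph{relative to} $|f'(z_0)|$, with no norm-boundedness of $B$ and no uniform lower bound on $|f'|$ required. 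Setting $\mu = \min_l |l|^{1/(l-1)} \cdot \delta$ then works uniformly in $w$. One further small point: for disjointness of the disks $\overline{D}_{R_k}(z_k)$ you propose to ``shrink $\mu$ once more if necessary,'' but the required shrinkage would depend on $\min_{j \neq k}|z_j - z_k|$, hence on $w$, which threatens exactly the uniformity you need. The clean argument is that if two of the disks met, one would contain the other by the ultrametric property, and then $f$ would fail to be injective on the larger one, contradicting the bijectivity already established; no shrinking is needed.
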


\begin{proof}[Proof of Lemma \ref{3.2}]
Let us fix an arbitrary element $w$ in $B$.
Since the degree of $f$ is $d$ and $\mathbb{C}_p$ is algebraically closed, there exist $d$ elements $z_1, z_2, \cdots, z_d$ in $\mathbb{C}_p$, counted with multiplicity, such that $f(z_k) = w$ for each $k$ in $\{ 1, 2, \cdots, d \}$.
Since the set $B$ has no critical values of $f$, it is clear that the elements $z_1, z_2, \cdots, z_d$ must be distinct.

Let us construct a positive real number $\mu$, which is independent of $w$, as follows.
First, we can find a positive real number $\delta$ satisfying 
$$
\bigcup_{z: \text{ critical point} }\overline{D}_{\delta}(z) \cap f^{-1}(B) = \emptyset,
$$
since the number of critical points of $f$ in $\mathbb{C}_p$ is finite and the critical points of $f$ are in $\mathbb{C}_p \backslash f^{-1}(B)$ which is an open set with respect to $|\cdot|$.
Next we set another positive real number
$$
\mu := \min\{|l|^{1 / (l-1)} \mid l \in \{ 2, 3, \cdots, d \} \} \cdot \delta > 0.
$$
Note that for all integers $m$, we have $|m| \leq 1 $. 
Now let us show the following proposition.

\begin{proposition}\label{casethan2}
For any $z_0$ in $f^{-1}(B)$ and $l$ in $\{2, 3, \cdots, d \}$, we have
$$
|l| \cdot \left| {f^{(l)}(z_0) \over l !} \right| \cdot \delta^{l - 1}  < |f'(z_0) |
$$ 
where $f^{(l)}$ is the $l$ th derivative of $f$.
\end{proposition}
\begin{proof}[Proof of Proposition \ref{casethan2}.]
Considering the expansions of $f$ and $f'$ around $z_0$, we have
\begin{align*}
f(z) &= f(z_0) + f'(z_0) (z - z_0) + \cdots + {f^{(d)}(z_0) \over d!} (z - z_0)^d, \\
f'(z) &= f'(z_0) + 2 \cdot { f^{(2)}(z_0) \over 2 ! } (z - z_0) + \cdots + d \cdot {f^{(d)}(z_0) \over d !} (z - z_0)^{d- 1}.
\end{align*}
Moreover, it follows immediately from the construction of $\delta$ that $f$ has no critical points in $\overline{D}_{\delta}(z_0)$.
Hence, by Theorem \ref{weierstrass}, for any $l$ in $\{ 2, 3, \cdots, d \}$, we have
$$
|l| \cdot \left| {f^{(l)}(z_0) \over l! } \right| \cdot \delta^{l -1} < |f'(z_0)|.
$$
\end{proof}

In particular, this implies that 
$$
r^{l} \cdot \left| {f^{(l)}  (z_0) \over l ! } \right|  
\leq r \cdot \left| {f^{(l)}  (z_0) \over l ! } \right| \cdot \mu^{l-1} 
\leq r \cdot \left| {f^{(l)}  (z_0) \over l ! } \right| \cdot | l | \cdot \delta^{l-1} 
< r \cdot |f'(z_0)|
$$
for any $l \in \{2, 3, \cdots, d \}$ and $r \in [0, \mu] \cap |\mathbb{C}_p^{\times}|$. 

As a result, we have the following proposition.

\begin{proposition}\label{casethan1}
Suppose that $k \in \{ 1, 2, \cdots, d \}$ and $r \in [0, \mu] \cap |\mathbb{C}_p^{\times}|$. Then for any $x$ and $y$ in $\overline{D}_r(z_k)$, we have 
$$
|f(x) - f(y)| = |f'(z_k)| \cdot |x - y|.
$$
\end{proposition}
\begin{proof}[Proof of Proposition \ref{casethan1}.]
For any $k$ in $\{ 1, 2, \cdots, d\}$ and any $r$ in $[0, \mu] \cap |\mathbb{C}_p^{\times}|$, it follows from Proposition \ref{nonarchi} and Proposition \ref{casethan2} that
\begin{align*}
|f(x) - f(y)| =& |x - y| \cdot \max \{|f'(z_k)|, \left| { f^{(2)}(z_k) \over 2! } \right| \cdot |x - z_k + y - z_k|, \cdots, \\
&\qquad \qquad \qquad \left| { f^{(d)}(z_k) \over d! }  \right| \cdot |(x - z_k)^{d- 1} + (x - z_k)^{d - 2}(y - z_k) \\ 
&\qquad \qquad \qquad + \cdots + (y - z_k)^{d-1} | \}
= |f'(z_k)| \cdot |x - y|.
\end{align*}
\end{proof}

In particular, setting 
$$
R_k := r / |f'(z_k)|,
$$
we have 
$$
R_k \leq r \quad \text{and} \quad |f(x) - f(y)| = {r \over R_k} \cdot |x - y|
$$
for any $k$ in $\{1, 2, \cdots, d \}$, any $r$ in $[0, \mu] \cap |\mathbb{C}_p^{\times}|$, and any $x $ and $y$ in $\overline{D}_{R_k}(z_k)$. 
By Corollary \ref{bene}, we have that 
$$
f | \overline{D}_{R_k}(z_k) \rightarrow \overline{D}_r(w)
$$
is bijective for any $k$ in $\{ 1, 2, \cdots, d \}$. 
Moreover, by Proposition \ref{open},
the restriction map is homeomorphic for each $k$ in $\{1, 2, \cdots, d \}$.
\end{proof}


\subsection{Proof of Theorem \ref{2.1}}

In this subsection, we prove Theorem \ref{2.1} assuming Theorem \ref{2.2}, which will be proved in the next subsection.

\begin{proof}[Proof of Theorem \ref{2.1}]
Let $f : \hat{\mathbb{C}}_p \rightarrow \hat{\mathbb{C}}_p$ be a polynomial map of degree $d$ over $\mathbb{C}_p$ with $\mathcal{J}(f) \neq \emptyset$.
Suppose that there exists a $\lambda > 1$ such that $|f'(z)| > \lambda$ for any $z$ in $\mathcal{J}(f)$.

We will construct a set $B$ in $\mathbb{C}_p$ satisfying all conditions of Theorem\ref{2.2} as follows. 
Since $\mathcal{J}(f)$ has no critical points of $f$, all critical points of $f$ must be in $\mathcal{F}(f)$. 
In particular, this implies that the critical values of $f$ are also in the Fatou set $\mathcal{F}(f)$ because $\mathcal{F}(f)$ is forward invariant under $f$.
Moreover, since $\mathcal{F}(f)$ is open, we have a positive real number $\delta$ such that 
$$
\bigcup_{c \in C(f)} \overline{D}_{\delta}(c) \cap \mathcal{J}(f) = \emptyset
$$
where $C(f)$ is the set of critical points and critical values of $f$.
Without loss of generality, we may assume that the positive real number $\delta$ is in $|\mathbb{C}_p^{\times}|$. 
Let us define $B$ by 
$$
B = \bigcup_{z \in \mathcal{J}(f)}\overline{D}_{\mu}(z)
$$
where $\mu := \min\{|k|^{1 / (k - 1)} \mid k \in \{ 2, 3, \cdots, d \} \} \cdot \delta > 0$.
Note that $f$ has no critical points and critical values in $B$ because $\mu$ is less than or equal to $\delta$.
It is clear that $B$ is bounded and contains $\mathcal{J}(f)$ since $\mathcal{J}(f)$ is bounded.
Moreover, it is not hard to check that $\overline{D}_{\mu}(z)$ is contained in $B$ for any $z$ in $B$.

To complete this proof, let us check the other conditions stated in Theorem \ref{2.2}.

\begin{proposition}\label{expandingthm1}
For any $z$ in $B$, $|f'(z)| \geq \lambda.$
\end{proposition}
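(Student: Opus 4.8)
The plan is to prove that $|f'|$ is in fact \emph{constant} on each closed disk $\overline{D}_\delta(z_0)$ with $z_0 \in \mathcal{J}(f)$, with value $|f'(z_0)|$, and then to use that $B$ is covered by such disks. First I would fix an arbitrary $z \in B$ and pick $z_0 \in \mathcal{J}(f)$ with $z \in \overline{D}_\mu(z_0)$; since $\mu = \min\{|k|^{1/(k-1)} \mid k \in \{2,\dots,d\}\}\cdot\delta \le \delta$ (recall $|k|\le 1$), we have in fact $z \in \overline{D}_\delta(z_0)$.

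The next step is to observe that $\overline{D}_\delta(z_0)$ contains no critical point of $f$. This is exactly where the choice of $\delta$ is used: by construction $\overline{D}_\delta(c) \cap \mathcal{J}(f) = \emptyset$ for every $c \in C(f)$, in particular for every critical point $c$ of $f$; since $z_0 \in \mathcal{J}(f)$, this forces $|z_0 - c| > \delta$, i.e. $c \notin \overline{D}_\delta(z_0)$. As the critical points of $f$ in $\mathbb{C}_p$ are precisely the zeros of the polynomial $f'$ (which has degree $d-1 \ge 1$), we conclude that $f'$ has no zeros on $\overline{D}_\delta(z_0)$.

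Now I would invoke Proposition \ref{open}(2) applied to the polynomial $f'$ on the disk $\overline{D}_\delta(z_0)$: it yields $|f'(w)| = |f'(z_0)|$ for every $w \in \overline{D}_\delta(z_0)$, in particular $|f'(z)| = |f'(z_0)|$. Since $z_0 \in \mathcal{J}(f)$ and $f$ is immediately expanding on $\mathcal{J}(f)$, we have $|f'(z_0)| > \lambda$, whence $|f'(z)| = |f'(z_0)| > \lambda \ge \lambda$, which is the desired bound (and is precisely condition (1) of Theorem \ref{2.2} for the set $B$). If one prefers not to apply Proposition \ref{open}(2) to $f'$, the same conclusion follows by expanding $f'$ around $z_0$ as $f'(z) = f'(z_0) + \sum_{l=2}^{d} l\,\tfrac{f^{(l)}(z_0)}{l!}(z - z_0)^{l-1}$, bounding each higher-order term using Proposition \ref{casethan2} (whose proof only uses the absence of critical points of $f$ in $\overline{D}_\delta(z_0)$) together with $\mu^{l-1}\le |l|\,\delta^{l-1}$ and $|l|\le 1$, to obtain $|f'(z) - f'(z_0)| < |f'(z_0)|$, and then applying the strong triangle inequality of Proposition \ref{nonarchi}.

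I do not expect a serious obstacle here: the heart of the argument is the standard non-Archimedean principle that a sufficiently small additive perturbation does not change the $p$-adic norm. The only point requiring a little care is the bookkeeping — that $\mu \le \delta$, and that the defining property of $\delta$ genuinely rules out critical points of $f$ inside $\overline{D}_\delta(z_0)$ when $z_0 \in \mathcal{J}(f)$ — after which the estimate is immediate.
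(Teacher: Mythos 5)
Your proof is correct and follows essentially the same route as the paper: pick $z_0 \in \mathcal{J}(f)$ with $z \in \overline{D}_\mu(z_0) \subset \overline{D}_\delta(z_0)$, note that the choice of $\delta$ excludes all zeros of $f'$ from this disk, and apply Proposition \ref{open}(2) to the polynomial $f'$ to get $|f'(z)| = |f'(z_0)| \geq \lambda$. Your explicit ultrametric-symmetry justification that $c \notin \overline{D}_\delta(z_0)$ for every critical point $c$ is a detail the paper leaves implicit, but the argument is the same.
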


\begin{proof}[Proof of Proposition \ref{expandingthm1}.]
For any $z$ in $B$, there exists a $z_0$ in $\mathcal{J}(f)$ such that $z \in \overline{D}_\mu(z_0)$.
Since there is no zeros of $f'$ in $\overline{D}_{\mu}(z_0)$, it follows from Proposition \ref{open} that $|f'(z)| = |f'(z_0)| \geq \lambda$.
\end{proof}

To check the second condition, let us use the following proposition.

\begin{proposition}\label{invariantthm1}
The set $B$ is backward invariant under $f$, that is, $f^{-1}(B) \subset B$.
\end{proposition}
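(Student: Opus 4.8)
The plan is to deduce $f^{-1}(B)\subseteq B$ from Lemma \ref{3.2}, applied not to $B$ itself but to the Julia set $\mathcal{J}(f)$. First I would verify that $\mathcal{J}(f)$ satisfies the three hypotheses of Lemma \ref{3.2}: it is non-empty and $f$ is immediately expanding on it by assumption (with the constant $\lambda$ fixed above); $\mathcal{J}(f)$ contains no critical value of $f$, since (as already noted in the present proof) every critical point, and hence every critical value, of $f$ lies in the forward-invariant open set $\mathcal{F}(f)$; and $\mathcal{J}(f)$ is closed with respect to $\rho$, being the complement of the open set $\mathcal{F}(f)$. Furthermore, since $f^{-1}(\mathcal{J}(f))=\mathcal{J}(f)$ by total invariance, the $\delta$ already fixed satisfies $\bigcup_{z:\,\text{critical point}}\overline{D}_{\delta}(z)\cap f^{-1}(\mathcal{J}(f))=\emptyset$, so the construction in the proof of Lemma \ref{3.2} produces for $\mathcal{J}(f)$ precisely the number $\mu=\min\{|k|^{1/(k-1)}\mid k\in\{2,\dots,d\}\}\cdot\delta$ that was used to define $B$.

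Next I would take an arbitrary $w\in f^{-1}(B)$ and pick $z_0\in\mathcal{J}(f)$ with $f(w)\in\overline{D}_{\mu}(z_0)$. If $f(w)=z_0$ then $f(w)\in\mathcal{J}(f)$, so $w\in f^{-1}(\mathcal{J}(f))=\mathcal{J}(f)\subseteq B$ and there is nothing further to prove. Otherwise set $r:=|f(w)-z_0|\in(0,\mu]\cap|\mathbb{C}_p^{\times}|$. Applying Lemma \ref{3.2} to $\mathcal{J}(f)$ with this $\delta$ and $\mu$, the point $z_0$ has $d$ distinct preimages $\zeta_1,\dots,\zeta_d$ and
$$
f^{-1}(\overline{D}_r(z_0))=\bigsqcup_{k=1}^{d}\overline{D}_{R_k}(\zeta_k),\qquad R_k=r/|f'(\zeta_k)|<r\le\mu .
$$
Since $z_0\in\mathcal{J}(f)$ and $f^{-1}(\mathcal{J}(f))=\mathcal{J}(f)$, each $\zeta_k$ lies in $\mathcal{J}(f)$.

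Finally, from $f(w)\in\overline{D}_r(z_0)$ I obtain $w\in f^{-1}(\overline{D}_r(z_0))$, hence $w\in\overline{D}_{R_k}(\zeta_k)$ for some $k$; as $R_k<\mu$ and $\zeta_k\in\mathcal{J}(f)$, this gives $w\in\overline{D}_{\mu}(\zeta_k)\subseteq B$, which proves $f^{-1}(B)\subseteq B$. I expect the only genuinely delicate point to be the bookkeeping in the first step: one must be certain that the constant $\mu$ which Lemma \ref{3.2} supplies for $\mathcal{J}(f)$ may be taken to be the very $\mu$ already committed to in the definition of $B$. This is exactly why the $\delta$ in the proof of Theorem \ref{2.1} was chosen so that the $\delta$-balls around \emph{all} critical points and critical values miss $\mathcal{J}(f)$, rather than merely missing some a priori fixed preimage set.
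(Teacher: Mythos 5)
Your proposal is correct and follows essentially the same route as the paper: both arguments apply Lemma \ref{3.2} to a disk $\overline{D}_{\mu}(z_0)$ (or $\overline{D}_r(z_0)$ with $r\leq\mu$) centered at a point $z_0$ of $\mathcal{J}(f)$, use the total invariance of $\mathcal{J}(f)$ to place the $d$ preimages of $z_0$ in $\mathcal{J}(f)$, and conclude from $R_k<\mu$ that every preimage of a point of $B$ lands in some $\overline{D}_{\mu}(\zeta_k)\subset B$. Your extra care in taking $r=|f(w)-z_0|\in|\mathbb{C}_p^{\times}|$ (and treating $f(w)=z_0$ separately) is a harmless refinement of the paper's direct use of the radius $\mu$.
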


\begin{proof}[Proof of Proposition \ref{invariantthm1}.]
It follows from the construction of $B$ that for any $w \in B$, there exists a $w_0 \in \mathcal{J}(f)$ such that $w \in \overline{D}_{\mu}(w_0).$
By Lemma \ref{3.2}, there exist exactly $d$ elements $z_1, z_2, \cdots, z_d$ in $\mathcal{J}(f)$ such that 
$$
f^{-1}(\overline{D}_{\mu}(w_0)) = \bigsqcup_{k=1}^{d}\overline{D}_{R_k}(z_k) \quad \text{and} \quad f(z_1) = f(z_2) = \cdots = f(z_d) = w_0
$$
where $R_k = \mu / |f'(z_k)| < \mu$ for any $k$ in $\{1, 2, \cdots, d \}$.
This implies that 
$$
f^{-1}(\{ w \}) \subset f^{-1}(\overline{D}_{\mu}(w_0)) = \bigsqcup_{k = 1}^{d}\overline{D}_{R_k}(z_k) \subset \bigsqcup_{k = 1}^{d}\overline{D}_{\mu}(z_k) \subset B.
$$
\end{proof}

Hence, Theorem \ref{2.2} can be applied.
\end{proof}



\subsection{Proof of Theorem \ref{2.2}}

Now let us prove Theorem \ref{2.2}.

\begin{proof}[Proof of Theorem \ref{2.2}]

Let $f : \hat{\mathbb{C}}_p \rightarrow \hat{\mathbb{C}}_p$ be a polynomial map of the degree $d \geq 2$ over $\mathbb{C}_p$ and $B$ be a set in $\mathbb{C}_p$ and $\lambda > 1$ be a real number satisfying $|f'(z)| \geq \lambda$ for any $z \in B$.
Let $M$ and a $\delta$ be positive real numbers satisfying 
$$
B \subset \overline{D}_{M}(0) \quad \text{and} \quad \overline{D}_{\delta}(z) \subset B
$$
for any $z$ in $B$.
Without loss of generality, we may assume that $M \geq 1$ and $0 < \delta \leq 1$.

Let us define {\it the $i$ th perturbation $T_{i, \epsilon} : Poly_d(\mathbb{C}_p) \rightarrow Poly_d(\mathbb{C}_p)$ of $f$ for $\epsilon \in \mathbb{C}_p$} by
$$
T_{i, \epsilon}(f)(z) := f(z) + \epsilon \cdot z^i
$$
with $T_{i, \epsilon}(f)(\infty) = \infty$ where $0 \leq i \leq d$.

Moreover, we define {\it the set of polynomial maps preserving $\lambda$ and $B$} by 
$$
\mathcal{S} := \mathcal{S}(d, \lambda, B) := \{g \in Poly_d(\mathbb{C}_p) \mid |g'(z)| \geq \lambda (\forall z \in B) \quad \text{and} \quad g^{-1}(B) \subset B \}.
$$
Note that it is clear that $\mathcal{S}$ contains $f$ so it is non-empty.
We will complete this proof in three steps.

\paragraph{Step $1$.}
In this step, we will show that for any $i \in \{d, d-1, \cdots, 0 \}$, there exists a positive real number $\tau(i)$ such that if $g$ is in $\mathcal{S}$ and $\epsilon \in \mathbb{C}_p$ satisfies $|\epsilon| < \tau(i)$, then $T_{i, \epsilon}(g)$ is also in $\mathcal{S}$.

Let us fix an arbitrary $i$ in $\{d, d-1, \cdots, 0 \}$ and set a positive real number $\tau(i) := \tau := \min\{ \eta_1, \eta_2 \}$ for 
$$
\eta_{1} := \eta_{1}(i) := { \lambda / M^{i -1} } > 0 \quad \text{and} \quad \eta_{2} := \eta_{2}(i) := { \mu / M^{i} } > 0
$$
where $\mu = \min\{ |k|^{1 / (k-1)} \mid k \in \{2, 3, \cdots, d \} \} \cdot \delta > 0$.

\begin{proposition}\label{step0}
For any $g \in \mathcal{S}, z \in B$, and $\epsilon \in \mathbb{C}_p$ with $|\epsilon| < \tau$, we have 
$$
|T_{i, \epsilon}(g)'(z)| = |g'(z)|.
$$
\end{proposition}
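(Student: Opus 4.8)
The plan is to differentiate $T_{i,\epsilon}(g)$ explicitly and then read off the claim from the non-Archimedean inequality of Proposition \ref{nonarchi}. Since $T_{i,\epsilon}(g)(z) = g(z) + \epsilon z^i$ by definition, we have $T_{i,\epsilon}(g)'(z) = g'(z) + i\epsilon z^{i-1}$, so it suffices to show that the perturbation term satisfies $|i\epsilon z^{i-1}| < |g'(z)|$ for every $z \in B$ and every $\epsilon$ with $|\epsilon| < \tau$; Proposition \ref{nonarchi} then gives $|g'(z) + i\epsilon z^{i-1}| = \max\{|g'(z)|, |i\epsilon z^{i-1}|\} = |g'(z)|$.

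First I would treat $i = 0$ separately: in that case $T_{0,\epsilon}(g)'(z) = g'(z)$ identically and there is nothing to prove. For $i \in \{1, 2, \dots, d\}$, I would estimate $|i\epsilon z^{i-1}| = |i| \cdot |\epsilon| \cdot |z|^{i-1}$. Since $i$ is a positive integer, $|i| \leq 1$; since $z \in B \subset \overline{D}_M(0)$ and $M \geq 1$, we get $|z|^{i-1} \leq M^{i-1}$. Combining these with $|\epsilon| < \tau \leq \eta_1 = \lambda / M^{i-1}$ yields $|i\epsilon z^{i-1}| < \lambda$. On the other hand $g \in \mathcal{S}$ guarantees $|g'(z)| \geq \lambda$, so $|i\epsilon z^{i-1}| < |g'(z)|$ as required, and in particular the two norms are distinct.

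I expect there is no genuine obstacle here; the argument is a one-line ultrametric estimate. The only point needing a moment's care is that the inequality $|i\epsilon z^{i-1}| < |g'(z)|$ is \emph{strict} --- which is automatic because the left-hand side is $< \lambda$ while the right-hand side is $\geq \lambda$ --- so that the equality case (the ``moreover'' clause) of Proposition \ref{nonarchi}, rather than only the weak triangle inequality, is genuinely applicable. Note also that the second parameter $\eta_2$ in the definition of $\tau$ plays no role in this proposition; it will only be needed in the later steps of the proof of Theorem \ref{2.2} to control $T_{i,\epsilon}(g)^{-1}(B) \subset B$.
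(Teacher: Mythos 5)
Your proof is correct and follows essentially the same route as the paper: bound $|i\epsilon z^{i-1}|$ by $|\epsilon| M^{i-1} < \lambda \leq |g'(z)|$ using $|i|\leq 1$ and $|\epsilon| < \eta_1$, then invoke the equality case of Proposition \ref{nonarchi}. Your separate treatment of $i=0$ is a slightly cleaner handling of the formally awkward $z^{-1}$ than the paper's, but the argument is otherwise identical.
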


\begin{proof}[Proof of Proposition \ref{step0}]
since $\epsilon \in \mathbb{C}_p$ satisfies $|\epsilon| < \tau \leq \eta_1$, it is straightforward that
$$
|i \cdot \epsilon \cdot z^{i - 1}| \leq |\epsilon| \cdot |z|^{i - 1} < {\lambda \over M^{i - 1}} \cdot M^{i - 1} = \lambda 
$$
for any $z$ in $B$.
Thus, it follows from Proposition \ref{nonarchi} that
$$
|T_{i, \epsilon}(g)'(z)| 
= |g'(z) + i \cdot \epsilon \cdot z^{i - 1}| 
= \max\{|g'(z)|, |i \cdot \epsilon \cdot z^{i - 1}| \} 
= |g'(z)|
$$
for any $z$ in $B$.
\end{proof}

Next, we prove the following proposition.

\begin{proposition}\label{step1}
For any $g \in \mathcal{S}$ and $\epsilon \in \mathbb{C}_p$ with $|\epsilon| < \tau$, we have 
$$
T_{i, \epsilon}(g)^{-1}(B) \subset B.
$$
\end{proposition}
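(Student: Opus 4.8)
The plan is to prove the sharper statement that $T_{i,\epsilon}(g)^{-1}(\{w\})\subset B$ for every point $w\in B$; since every point of $T_{i,\epsilon}(g)^{-1}(B)$ is a preimage of some $w\in B$, this gives $T_{i,\epsilon}(g)^{-1}(B)\subset B$. Write $h:=T_{i,\epsilon}(g)$, so $h(z)=g(z)+\epsilon z^{i}$, and recall $g\in\mathcal S$, $|\epsilon|<\tau=\min\{\eta_{1},\eta_{2}\}$. Fix $w\in B$. First I would pin down the preimages of $w$ under the unperturbed map $g$ by applying Lemma \ref{3.2} to $(g,B)$. Its hypotheses hold: $g$ is immediately expanding on $B$ since $g\in\mathcal S$ forces $|g'|\geq\lambda>1$ there (so $g$ has no critical point in $B$); $B$ contains no critical value of $g$, because a critical point mapping into $B$ would lie in $g^{-1}(B)\subset B$, contradicting $|g'|\geq\lambda>0$ on $B$; and $B$ is closed with respect to the chordal metric, since it is closed in $\mathbb{C}_p$ (if $b_{n}\in B$ with $b_{n}\to b$, then $b\in\overline D_{\delta}(b_{n})\subset B$ for $n$ large) and this transfers to $\rho$ as $B$ is bounded. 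With $r=\mu$ in Lemma \ref{3.2} (the constant it produces may be taken to equal $\mu$, because the $\delta$ of Step $1$ also makes the $\delta$-neighbourhoods of the critical points of $g$ disjoint from $g^{-1}(B)$, and $\mu\in|\mathbb{C}_p^{\times}|$), I obtain $d$ distinct points $z_{1},\dots,z_{d}$ with $g(z_{k})=w$ and $g^{-1}(\overline D_{\mu}(w))=\bigsqcup_{k=1}^{d}\overline D_{R_{k}}(z_{k})$, where $R_{k}=\mu/|g'(z_{k})|$. Since $w\in B$, each $z_{k}\in g^{-1}(B)\subset B$, and since $R_{k}\leq\mu/\lambda<\mu\leq\delta$ we get $\overline D_{R_{k}}(z_{k})\subset\overline D_{\delta}(z_{k})\subset B$; the disks $\overline D_{R_{k}}(z_{k})$ are pairwise disjoint and, by Corollary \ref{bene}, $g$ satisfies $|g(x)-g(y)|=|g'(z_{k})|\,|x-y|$ for $x,y\in\overline D_{R_{k}}(z_{k})$ and maps this disk bijectively onto $\overline D_{\mu}(w)$.

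Next I would transplant this picture to $h$ disk by disk. Fix $k$ and take $x,y\in\overline D_{R_{k}}(z_{k})\subset B\subset\overline D_{M}(0)$. Then $|\epsilon(x^{i}-y^{i})|\leq|\epsilon|\,M^{i-1}|x-y|<\eta_{1}M^{i-1}|x-y|=\lambda|x-y|\leq|g'(z_{k})|\,|x-y|=|g(x)-g(y)|$, so by Proposition \ref{nonarchi} we get $|h(x)-h(y)|=\max\{|g(x)-g(y)|,|\epsilon(x^{i}-y^{i})|\}=|g'(z_{k})|\,|x-y|$; hence, by Corollary \ref{bene}, $h$ maps $\overline D_{R_{k}}(z_{k})$ bijectively onto $\overline D_{\mu}(h(z_{k}))$. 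Moreover $h(z_{k})=w+\epsilon z_{k}^{i}$, so $|h(z_{k})-w|=|\epsilon|\,|z_{k}|^{i}\leq|\epsilon|\,M^{i}<\eta_{2}M^{i}=\mu$, whence $w\in\overline D_{\mu}(h(z_{k}))$ and there is a unique $z_{k}'\in\overline D_{R_{k}}(z_{k})$ with $h(z_{k}')=w$. (When $i=0$ the perturbation is constant and the first estimate is trivial.)

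Finally I would count. The points $z_{1}',\dots,z_{d}'$ are distinct, lying in the pairwise disjoint disks $\overline D_{R_{k}}(z_{k})$, and all lie in $\bigsqcup_{k}\overline D_{R_{k}}(z_{k})\subset B$. But $h=g+\epsilon z^{i}$ has degree at most $d$, so $h(z)=w$ has at most $d$ roots in $\mathbb{C}_p$; hence $h^{-1}(\{w\})=\{z_{1}',\dots,z_{d}'\}\subset B$, and since $w\in B$ was arbitrary, $h^{-1}(B)\subset B$. (As a byproduct $h$ has $d$ distinct roots over $w$, so $\deg h=d$ and $h\in Poly_d(\mathbb{C}_p)$; with Proposition \ref{step0} this completes Step $1$.) I do not expect any one isolated hard inequality here: the real point is that $\tau$ must simultaneously control the perturbation in two ways — through $\eta_{1}$, so that the non-archimedean cancellation keeps $|h(x)-h(y)|=|g(x)-g(y)|$ and $h$ remains a scaled isometry on each $\overline D_{R_{k}}(z_{k})$, and through $\eta_{2}$, so that the shifted image disk $\overline D_{\mu}(h(z_{k}))$ still contains $w$ — all while keeping the radii $R_{k},\mu$ and the bound $M$ correctly aligned; that coordination, rather than any single step, is the crux.
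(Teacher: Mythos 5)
Your proof is correct, and it follows the same overall skeleton as the paper's (fix $w\in B$, locate its $d$ preimages under the unperturbed $g$ inside pairwise disjoint disks $\overline{D}_{R_k}(z_k)\subset B$, show each disk still contains exactly one preimage of $w$ after perturbation, and conclude by counting degrees), but the central step is carried out with a genuinely different tool. The paper expands $T_{i,\epsilon}(g)(z)-w$ around each $z_k$ and invokes the Weierstrass preparation theorem (Theorem \ref{weierstrass}) after checking, in Lemmas \ref{step1l1} and \ref{step1l2}, that the linear coefficient dominates both the constant term $\epsilon z_k^i$ and all higher-order terms on $\overline{D}_{R_k}(z_k)$; the second of these requires a further Weierstrass argument applied to $T_{i,\epsilon}(g)'$. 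You instead reuse the scaled-isometry identity $|g(x)-g(y)|=|g'(z_k)|\,|x-y|$ already furnished by Lemma \ref{3.2} and Corollary \ref{bene}, absorb the perturbation via the ultrametric estimate $|\epsilon(x^i-y^i)|<\lambda|x-y|\leq|g(x)-g(y)|$, and conclude from Corollary \ref{bene} that $T_{i,\epsilon}(g)$ maps $\overline{D}_{R_k}(z_k)$ bijectively onto $\overline{D}_{\mu}(T_{i,\epsilon}(g)(z_k))$, a disk that still contains $w$ because $|\epsilon|\,|z_k|^i<\eta_2 M^i=\mu$. The two routes use the thresholds $\eta_1$ and $\eta_2$ in exactly the same roles (controlling the derivative-level and value-level perturbations respectively), but yours avoids the second application of Weierstrass preparation and is arguably cleaner; it also yields explicitly, via the count of $d$ distinct roots, that $T_{i,\epsilon}(g)$ has exact degree $d$, a point the paper leaves implicit. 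Your parenthetical justification that the $\delta$ of Step $1$ serves as the $\delta$ in the construction of $\mu$ in Lemma \ref{3.2} (a critical point within distance $\delta$ of a point of $g^{-1}(B)\subset B$ would itself lie in $B$) is exactly the observation needed to make the two constants agree, so no gap remains there.
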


\begin{proof}[Proof of Proposition \ref{step1}.]
Let $w$ be an element in $B$. 
Then there exist exactly $d$ elements $z_1, z_2, \cdots, z_d$ in $B$ such that 
$$
g(z_1) = g(z_2) = \cdots = g(z_d) = w
$$
since $B$ has no critical values of $g$. 
Indeed, if $B$ has a critical value of $g$, then $B$ has a critical point of $g$ because $g^{-1}(B) \subset B$. 
However, it is a contradiction to the assumption that $g$ is immediately expanding on $B$.

We will show that for each $k$ in $\{ 1, 2, \cdots, d \}$, there exists exactly one element $\tilde{z}_{k} := \tilde{z}_k(\epsilon)$ in $\overline{D}_{R_k}(z_k)$ such that $T_{i, \epsilon}(g)(\tilde{z}_{k}) = w$ where $R_k := \mu / |g'(z_k)|$.
To ease notation, we shall use ${g}_{\epsilon}$ to denote $T_{i, \epsilon}(g)$.

Fixing $k$ in $\{1, 2, \cdots, d \}$ and considering the expansion ${g}_{\epsilon}$ around $z_k$, we have
$$
{g}_{\epsilon}(z) = {g}_{\epsilon}(z_k) + {g}_{\epsilon}'(z_k)(z - z_k) + \cdots + {{g}_{\epsilon}^{(d)}(z_k) \over d ! } (z - z_k)^d.
$$
This implies that 
\begin{align*}
{g}_{\epsilon}(z) - w
&= {g}_{\epsilon}(g)(z_k) - w + {g}_{\epsilon}'(z_k)(z - z_k) + \cdots + {{g}_{\epsilon}^{(d)}(z_k) \over d ! } (z - z_k)^d \\
&= \epsilon \cdot z_k^i + {g}_{\epsilon}'(z_k)(z - z_k) + \cdots + {{g}_{\epsilon}^{(d)}(z_k) \over d ! } (z - z_k)^d.
\end{align*}

Let us see the following lemmas.

\begin{lemma}\label{step1l1}
For any $\epsilon$ in $\mathbb{C}_p$ with $|\epsilon| < \tau$, we have 
$$
|\epsilon| \cdot |z_k|^{i} < |{g}_{\epsilon}'(z_k)| \cdot R_k.
$$
\end{lemma}

\begin{proof}[Proof of Lemma \ref{step1l1}.]
It follows from Proposition \ref{step0} that 
$$
|\epsilon| \cdot |z_k|^{i} < { \mu \over M^{i}} \cdot M^i  = \mu = |g'(z_k)| \cdot R_k = |{g}_{\epsilon}'(z_k)| \cdot R_k.
$$
\end{proof}

\begin{lemma}\label{step1l2}
For any $\epsilon$ in $\mathbb{C}_p$ with $|\epsilon| < \tau$ and any $l$ in $\{2, 3, \cdots, d \}$, we have 
$$
\left| {{g}_{\epsilon}^{(l)}(z_k) \over l ! } \right| \cdot R_k^l  < |{g}_{\epsilon}'(z_k)| \cdot R_k.
$$
\end{lemma}

\begin{proof}[Proof of Lemma \ref{step1l2}.]
Considering the expansion of ${g}_{\epsilon}'$ around $z_k$, we obtain that
$$
{g}_{\epsilon}'(z) = {g}_{\epsilon}'(z_k) + 2 \cdot { {g}_{\epsilon}^{(2)}(z_k) \over 2! } (z - z_k) + \cdots + d \cdot {{g}_{\epsilon}^{(d)}(z_k) \over d!} (z - z_k)^{d - 1}.
$$
It follows from Proposition \ref{step0} that ${g}_{\epsilon}'$ has no zeros in $\overline{D}_{\delta}(z_k)$. 
Moreover, it follows from Theorem \ref{weierstrass} that
$$
\left| {{g}_{\epsilon}^{(l)} (z_k) \over l ! } \cdot l  \right| \cdot \delta^{l-1} < |{g}_{\epsilon}'(z_k)|
$$
for each $l$ in $\{2, 3, \cdots, d \}$.
Finally, we obtain that 
\begin{align*}
\left| {{g}_{\epsilon}^{(l)} (z_k) \over l ! } \right| \cdot R_k^{l} 
&= R_k \cdot \left| {{g}_{\epsilon}^{(l)}(z_k) \over l ! } \right| \cdot R_k^{l-1} 
< R_k \cdot \left| {{g}_{\epsilon}^{(l)} (z_k) \over l ! } \right| \cdot \mu^{l-1} \\
&\leq R_k \cdot \left| {{g}_{\epsilon}^{(l)} (z_k) \over l ! } \right| \cdot |l| \cdot \delta^{l-1} 
< |{g}_{\epsilon}'(z_k)| \cdot R_k.
\end{align*}
\end{proof}

Hence, by Theorem \ref{weierstrass}, the polynomial map ${g}_{\epsilon}(z) - w$ has a unique zero in $\overline{D}_{R_k}(z_k)$.
Moreover, it is easy to check that each disk $\overline{D}_{R_k}(z_k)$ is disjoint.
Let us denote the unique zero of ${g}_{\epsilon}(z) - w$ in $\overline{D}_{R_k}(z_k)$ by $\tilde{z}_k$ for each $k$ in $\{1, 2, \cdots, d \}$.
In particular, this implies $\tilde{z}_{k}$ in $B$ since 
$$
\tilde{z}_{k} \in \overline{D}_{R_k}(z_k) \subset \overline{D}_{\mu}(z_k) \subset \overline{D}_{\delta}(z_k) \subset B
$$
for each $k$ in $\{1, 2, \cdots, d \}$.
\end{proof}

By Proposition \ref{step0} and \ref{step1}, $T_{i, \epsilon}(\mathcal{S}) \subset \mathcal{S}$ for any $\epsilon \in \mathbb{C}_p$ satisfying $|\epsilon| < \tau(i)$.
In particular, this implies that 
$$
T_{0, \epsilon_0} \circ T_{1, \epsilon_1} \circ \cdots \circ T_{d, \epsilon_d}(\mathcal{S}) \subset \mathcal{S}
$$ 
if a sequence $\{ \epsilon_k \}_{k = 0}^{d} \subset \mathbb{C}_p$ satisfies $\max\{ |\epsilon_0|, |\epsilon_1|, \cdots, |\epsilon_d| \} \leq \min\{\tau(0), \tau(1), \cdots, \tau(d) \}$.

Let us close this step with the following proposition.

\begin{proposition}\label{step3}
Let $g$ be a element in $\mathcal{S}$ and $i$ be a natural number in $\{d, d-1, \cdots, 0 \}$.
If $\epsilon \in \mathbb{C}_p$ satisfies $|\epsilon| < \tau(i)$, then we have
$$
|T_{i, \epsilon}(g)(z) - g(z) | \leq \mu
$$
for any $z \in B$.
In particular, if $\{ \epsilon_k \}_{k = 0}^{d} \subset \mathbb{C}_p$ satisfies $\max\{ |\epsilon_0|, |\epsilon_1|, \cdots, |\epsilon_d| \} \leq \min\{\tau(0), \tau(1), \cdots, \tau(d) \}$, then 
$$
|T_{d, \epsilon_d} \circ T_{d-1, \epsilon_{d-1}} \circ \cdots \circ T_{0, \epsilon_{0}}(f) (z) - f(z)| \leq \mu 
$$
for any $z \in B$.
\end{proposition}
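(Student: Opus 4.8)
The plan is to reduce both claims to an elementary monomial estimate via the ultrametric inequality (Proposition \ref{nonarchi}); there is essentially no analysis involved here, only the bookkeeping already built into the constants $\eta_2$ and $\mu$.

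For the main assertion I would simply unwind the definition: $T_{i,\epsilon}(g)(z) - g(z) = \epsilon\, z^i$, so it suffices to bound $|\epsilon|\cdot|z|^i$ on $B$. Since $B \subset \overline{D}_M(0)$ and we have arranged $M \geq 1$, every $z \in B$ satisfies $|z|^i \leq M^i$ for $i \in \{0,1,\dots,d\}$ (the case $i = 0$ being the trivial $1 = 1$). Combining this with $|\epsilon| < \tau(i) \leq \eta_2(i) = \mu/M^i$ yields $|\epsilon z^i| = |\epsilon|\cdot|z|^i < (\mu/M^i)\cdot M^i = \mu$, which is even slightly stronger than the asserted $\leq \mu$.

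For the ``in particular'' part I would first note that the perturbations $T_{i,\epsilon_i}$ pairwise commute, since each of them merely adds a monomial; hence
$$
T_{d,\epsilon_d}\circ T_{d-1,\epsilon_{d-1}}\circ\cdots\circ T_{0,\epsilon_0}(f)(z) - f(z) = \sum_{i=0}^{d}\epsilon_i z^i .
$$
Applying the strong triangle inequality of Proposition \ref{nonarchi} term by term gives $\bigl|\sum_{i=0}^d \epsilon_i z^i\bigr| \leq \max_{0 \leq i \leq d}|\epsilon_i|\cdot|z|^i$ for $z \in B$, and for each $i$ the hypothesis $|\epsilon_i| \leq \min\{\tau(0),\dots,\tau(d)\} \leq \tau(i) \leq \eta_2(i) = \mu/M^i$ together with $|z|^i \leq M^i$ bounds the $i$-th term by $\mu$, exactly as above. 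So the maximum is $\leq \mu$, which is what we want.

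I do not expect any genuine obstacle here: unlike Propositions \ref{step0} and \ref{step1}, this statement uses neither membership in $\mathcal{S}$ nor the Weierstrass-type Theorem \ref{weierstrass}, only the non-archimedean estimate and the choice of the constants. The only points worth a sentence of care are the commutativity of the perturbations (so that the order in which the $T_{i,\epsilon_i}$ are composed is immaterial) and the trivial treatment of the exponent $i = 0$. This proposition is precisely the quantitative statement that, in the remaining steps of the proof of Theorem \ref{2.2}, will let us conclude that every sufficiently small perturbation of $f$ stays uniformly $\mu$-close to $f$ on $B$, which is what will make the disk combinatorics of Lemma \ref{3.2} transfer from $f$ to the perturbed map.
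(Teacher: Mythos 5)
Your proof is correct and essentially identical to the paper's: both parts come down to the monomial bound $|\epsilon_i|\cdot|z|^i < (\mu/M^i)\cdot M^i = \mu$ on $B$ followed by the ultrametric inequality. The only cosmetic difference is that you use commutativity of the $T_{i,\epsilon_i}$ to write the composite difference directly as $\sum_{i=0}^{d}\epsilon_i z^i$, whereas the paper telescopes it as $\sum_k (f_k - f_{k-1})$ and applies the first assertion to each consecutive difference --- the resulting estimate is the same.
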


\begin{proof}[Proof of Proposition \ref{step3}]
It is straightforward that 
$$
|T_{i, \epsilon}(g)(z) - g(z) | = |g(z) + \epsilon \cdot z^{i} - g(z)| 
= |\epsilon| \cdot |z|^i \leq \tau(i) \cdot M^i \leq {\mu \over M^i} \cdot  M^i = \mu
$$
for any $z \in B$.
Moreover, for any $\{ \epsilon_k \}_{k = 0}^{d} \subset \mathbb{C}_p$ satisfying $\max\{ |\epsilon_0|, |\epsilon_1|, \cdots, |\epsilon_d| \} \leq \min\{\tau(0), \tau(1), \cdots, \tau(d) \}$, we have 
\begin{align*}
|f_{d}(z) - f(z)| 
&= |f_{d}(z) - f_{d-1}(z) + f_{d-1}(z) - f_{d-2}(z) + \cdots - f_{0}(z) + f(z) | \\
&\leq \max \{ |f_{d}(z) - f_{d-1}(z)|, | f_{d-1}(z) -  f_{d-2}(z)|, \cdots, | f_{0}(z) - f(z)|\} 
\leq \mu
\end{align*}
for any $z \in B$ where $f_k(z) := T_{k, \epsilon_k} \circ T_{k-1, \epsilon_{k-1}} \circ T_{0, \epsilon_0}(f) (z)$ for each $k \in \{d, d-1, \cdots, 0 \}$.
\end{proof}

\paragraph{Step $2$.}
In this step, we will construct a conjugacy between the dynamics $\mathcal{J}(f)$ and $\mathcal{J}(g)$ if $g \in \mathcal{S}$ satisfies $|g(z) - f(z)| \leq \mu$ for any $z \in B$ where $\mu := \min \{ |l|^{1/(l - 1)} \mid l \in \{ 2, 3, \cdots, d \} \} \cdot \delta$.
Let us begin with a construction of sets.

\paragraph{$\bullet$ The construction of a family $\{ \Omega_{k}(g) \}_{k \geq 0}$ of sets in $\mathbb{C}_p$.}

For any $g \in \mathcal{S}$, we define a family $\{ \Omega_{k}(g) \}_{k \geq 0}$ of sets in $\mathbb{C}_p$ by 
$$
\Omega_{0}(g) := B, \quad \Omega_{1}(g) := g^{-1}(\Omega_{0}(g)), \quad \cdots, \quad \Omega_{k + 1}(g) := g^{-1}(\Omega_{k}(g)), \quad \cdots
$$
for each $k \geq 0$. 

Moreover, we define the limit set of $\{ \Omega_{k}(g) \}_{k \geq 0}$ by 
$$
\Omega_{\infty}(g) := \bigcap_{k = 0}^{\infty}\Omega_{k}(g).
$$

Since $B$ is backward invariant under $g$, it is clear that 
$$
\Omega_{\infty}(g) \quad \subset \quad \cdots \quad \subset \quad \Omega_{k + 1}(g) \quad \subset \quad \Omega_{k}(g) \quad \subset \quad \cdots \quad \subset \quad \Omega_{0}(g) \quad = \quad B
$$
for each $k \geq 0$.

To ease notation, we denote the sets $\Omega_{k}(f)$ by $\Omega_{k}$ for each $k \geq 0$ and $k = \infty$.
Then it is clear that 
$$
\mathcal{J}(f) \quad \subset \quad \Omega_{\infty} \quad \subset \quad \cdots \quad \subset \quad \Omega_{k + 1} \quad \subset \quad \Omega_{k} \quad \subset \quad \cdots \quad \subset \quad \Omega_{0}.
$$
In particular, this implies that the limit set $\Omega_{\infty}$ is non-empty.

\paragraph{$\bullet$ The construction of a family $\{ h_k : \Omega_k \rightarrow \Omega_k(g) \}_{k \geq 0}$ of homeomorphisms.}

Let us fix $g \in \mathcal{S}$ satisfying $|f(z) -g(z)| \leq \mu$ for any $z \in B$ and define $h_0$ as the identity map on $B$.
It is clear that $h_0 : \Omega_{0} \rightarrow \Omega_{0}(g)$ is a homeomorphism.
We will construct a family of homeomorphism $\{ h_k : \Omega_k \rightarrow \Omega_k(g) \}_{k \geq 1}$ of homeomorphisms, inductively.

To define the map $h_1: \Omega_1 \rightarrow \Omega_{1}(g)$, let us prove the following proposition.

\begin{proposition}\label{step2}
Let $g \in \mathcal{S}$ be an element satisfying $|g(z) - f(z)| \leq \mu$ for any $z \in B$.
Then for any $z$ in $\Omega_1$, there exists a unique $w := w(z)$ in $g^{-1}( \{ h_0 \circ f(z) \} )$ such that
$$
|w - z| \leq { \mu / \lambda }.
$$
\end{proposition}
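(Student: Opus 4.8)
The idea is to apply the $p$-adic Weierstrass preparation theorem (Theorem~\ref{weierstrass}) to the polynomial $g(x)-h_0\circ f(z)=g(x)-f(z)$, viewed as a polynomial in $x$ expanded around the point $z$ itself, on a disk of radius $\mu/\lambda$. First I would record that, since $z\in\Omega_1=f^{-1}(B)$, the value $f(z)$ lies in $B$; moreover $z\in\Omega_1\subset\Omega_0=B$ as well, so both the expansion point $z$ and all the derivative estimates for $f$ (hence for $g$, via Step~1) are available at $z$. The constant term of $g(x)-f(z)$ expanded around $x=z$ is $g(z)-f(z)$, which has absolute value $\le\mu$ by hypothesis; the linear coefficient is $g'(z)$, with $|g'(z)|\ge\lambda>1$ because $g\in\mathcal S$ and $z\in B$.

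Next I would choose the radius $r:=\mu/\lambda$ and compute the quantity $l$ from Theorem~\ref{weierstrass} for the expansion of $g(x)-f(z)$ around $z$ at this radius. The key numerical claim is that $l=1$, i.e.\ that the linear term strictly dominates. For the constant term this is the inequality $|g(z)-f(z)|\le\mu=\lambda\cdot(\mu/\lambda)\le|g'(z)|\cdot r$ — here one needs $\mu\le|g'(z)|\cdot(\mu/\lambda)$, which follows from $|g'(z)|\ge\lambda$. For the higher-order coefficients $g^{(j)}(z)/j!$ with $2\le j\le d$, I would invoke the estimates already established in the proof of Step~1: since $g\in\mathcal S$, Proposition~\ref{step0}-type reasoning gives $|g^{(j)}(z)/j!|\cdot\delta^{j-1}<|g'(z)|$ (via Theorem~\ref{weierstrass} applied to $g'$ on $\overline D_\delta(z)$, using that $g'$ has no zeros there because $z\in B$ and $|g'|\ge\lambda$ on $B$), and then $|g^{(j)}(z)/j!|\cdot r^{j}=r\cdot|g^{(j)}(z)/j!|\cdot r^{j-1}<r\cdot|g^{(j)}(z)/j!|\cdot\mu^{j-1}\le r\cdot|g^{(j)}(z)/j!|\cdot|j|\cdot\delta^{j-1}<r\cdot|g'(z)|$, exactly as in Lemma~\ref{step1l2}; this uses $r=\mu/\lambda<\mu$ and the definition of $\mu$. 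Hence $l=1$, and Theorem~\ref{weierstrass} yields exactly one root $w$ of $g(x)-f(z)$ in $\overline D_{\mu/\lambda}(z)$, i.e.\ $g(w)=f(z)=h_0\circ f(z)$ with $|w-z|\le\mu/\lambda$. Uniqueness inside that disk is part of the conclusion of Theorem~\ref{weierstrass} ($l=1$ means a single factor $(x-\alpha_1)$ with $\alpha_1$ in the disk and $g$ nonvanishing there).

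One should also check that $w$ is the \emph{only} preimage of $h_0\circ f(z)$ under $g$ lying near $z$ in the sense needed later — but for the statement as written only existence and uniqueness within $\overline D_{\mu/\lambda}(z)$ is asserted, so the Weierstrass argument suffices. Finally I would note that $w\in B$: since $|w-z|\le\mu/\lambda<\mu\le\delta$ and $z\in B$, we get $w\in\overline D_\delta(z)\subset B$, so $w$ genuinely lies in $\Omega_1(g)=g^{-1}(B)$ because $g(w)=f(z)\in B$.

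\textbf{Main obstacle.} The only real work is the bookkeeping in the preceding paragraph: verifying $l=1$ forces one to control all $d-1$ higher Taylor coefficients of $g$ at $z$ simultaneously and relate them to the single quantity $|g'(z)|$, and this is precisely where the somewhat delicate definition of $\mu$ (the factor $\min\{|k|^{1/(k-1)}\}$ absorbing the $p$-adic sizes of the binomial-type integers) is used. The argument is essentially a verbatim repeat of Lemmas~\ref{step1l1} and~\ref{step1l2} but with $R_k$ replaced by $r=\mu/\lambda$ and with the expansion centered at $z$ rather than at an exact preimage $z_k$; the contribution of the nonzero constant term $g(z)-f(z)$ is new and is handled by the hypothesis $|g(z)-f(z)|\le\mu$ together with $|g'(z)|\ge\lambda$. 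Everything else is a direct application of the machinery already set up.
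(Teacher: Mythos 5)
Your argument is correct, but it takes a different route from the paper's. The paper does not apply Theorem \ref{weierstrass} directly at $z$; instead it invokes Lemma \ref{3.2} for $g$ (whose hypotheses hold on $B$ because $g\in\mathcal S$ forces $B$ to contain no critical values of $g$): writing $g^{-1}(\{g(z)\})=\{z_1,\dots,z_d\}$, Lemma \ref{3.2} gives $g^{-1}(\overline D_\mu(g(z)))=\bigsqcup_k\overline D_{R_k}(z_k)$ with $g$ a homeomorphism from each $\overline D_{R_k}(z_k)$ onto $\overline D_\mu(g(z))$, and since $f(z)\in\overline D_\mu(g(z))$ by the hypothesis $|f(z)-g(z)|\le\mu$, each of these disks contains exactly one preimage of $f(z)$; taking the disk centered at $z$ itself (as $z$ is one of the $z_k$) and noting $R_k=\mu/|g'(z_k)|\le\mu/\lambda$ yields the desired $w$. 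Your version instead expands $g(x)-f(z)$ around the center $z$, absorbs the defect $g(z)-f(z)$ as a small constant term dominated by $|g'(z)|\cdot(\mu/\lambda)$, and reruns the Newton-polygon estimate of Lemma \ref{step1l2} to force $l=1$ on $\overline D_{\mu/\lambda}(z)$. What your route buys is a self-contained count: uniqueness within the full disk $\overline D_{\mu/\lambda}(z)$ falls out immediately from $l=1$, whereas the paper's injectivity argument is stated only on the smaller disk $\overline D_{R_k}(z_k)$ and leaves the reader to rule out stray preimages at distance between $R_k$ and $\mu/\lambda$. What it costs is duplicating the coefficient bookkeeping that Lemma \ref{3.2} was designed to encapsulate, and it must be redone again at the inductive stage (Proposition \ref{step2p2}) with $h_n(z)$ in place of $z$, where the constant term is controlled by $|h_n(z)-h_{n-1}(z)|$ rather than by $|g-f|$; your computation adapts to that setting without change. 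One minor point worth making explicit in your write-up: the hypothesis $g\in\mathcal S$ gives $|g'(z)|\ge\lambda$ only for $z\in B$, so you should note (as you implicitly do) that $\Omega_1\subset\Omega_0=B$ and $\overline D_\delta(z)\subset B$ before applying the no-zeros argument to $g'$ on $\overline D_\delta(z)$.
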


\begin{proof}[Proof of Proposition \ref{step2}.]

Let $z$ be an element in $\Omega_1$.
To ease notation, we shall denote 
$$
g^{-1}(\{ h_0 \circ g(z) \}) : = \{ z_1, z_2, \cdots, z_d \} \quad \text{and} \quad g^{-1}(\{ h_0 \circ f(z) \}) := \{w_1, w_2, \cdots, w_d \}.
$$
It follows from $|f(z) - g(z)| \leq \mu$ that $h_0 \circ f(z) = f(z) \in \overline{D}_{\mu}(g(z)) \subset B$.

Let us set 
$$
R_1 = \mu / |g'(z_1)|, \quad R_2 = \mu / |g'(z_2)|, \quad \cdots, \quad R_d = \mu / |g'(z_d)|.
$$
Then it follows from Lemma \ref{3.2} that 
$$
g^{-1}(\overline{D}_{\mu}(g(z))) = \bigsqcup_{k = 1}^{d} \overline{D}_{R_k}(z_k)
$$
and $g \mid \overline{D}_{R_k}(z_k) \rightarrow \overline{D}_{\mu}(g(z))$
is a homeomorphism for each $k$ in $\{1, 2, \cdots, d \}$.

Therefore, we obtain that
$$
\{w_1, w_2, \cdots, w_d \} = g^{-1}(\{ h_0 \circ f(z) \}) \subset g^{-1}(\overline{D}_{\mu}(g(z))) = \bigsqcup_{k = 1}^{d} \overline{D}_{R_k}(z_k).
$$
This implies that there exists a unique $w_k \in \overline{D}_{R_k}(z_k) \subset \overline{D}_{\mu / \lambda}(z_k)$ for each $k$ in $\{1, 2, \cdots, d \}$.
Indeed, if there exist two distinct elements $w_i$ and $w_j \in \overline{D}_{R_k}(z_k)$ for some $k$ in $\{1, 2, \cdots, d \}$, we have that $g(w_i) = g(w_j) = f(z) \in \overline{D}_{\mu}(g(z))$. However, it is a contradiction to the fact that 
$g \mid \overline{D}_{R_k}(z_k) \rightarrow \overline{D}_{\mu}(g(z))$ is injective. See Lemma \ref{3.2}.
\end{proof}

Let us define a map $h_1 : \Omega_1 \rightarrow \Omega_{1}(g)$ by $w := w(z)$ determined in Proposition \ref{step2}.
It follows from the construction of $h_1 : \Omega_1 \rightarrow \Omega_{1}(g)$ that $h_1 : \Omega_1 \rightarrow \Omega_{1}(g)$ is continuous and open on $\Omega_1$. 
Moreover, it satisfies $h_0 \circ f = g \circ h_{1}$ on $\Omega_1$.
Furthermore, one can construct the inverse map of $h_1 : \Omega_1 \rightarrow \Omega_{1}(g)$, similarly.
This implies that $h_1 : \Omega_1 \rightarrow \Omega_{1}(g)$ is a homeomorphism.

Let us assume that we have constructed a family $\{h_m : \Omega_{m} \rightarrow \Omega_{m}(g) \}_{m = 0}^{n}$ of homeomorphisms such that 
$$
| h_{m + 1}(z) - h_{m}(z)| < \mu / \lambda^{m + 1} 
$$
and $h_{m} \circ f = g \circ h_{m + 1}$ on $\Omega_{m + 1}$ for each $m$ in $\{ 0, 1, \cdots, n - 1 \}$ where $n \geq 1$.
Now we have the following claim.
The proof is similar to the proof of Proposition \ref{step2} so we omit it.

\begin{proposition}\label{step2p2}
For each $z \in \Omega_{n + 1}$, there exists a unique $w := w(z) \in g^{-1}( \{ h_{n} \circ f (z) \} )$ such that 
$$
|w - h_{n}(z)| \leq \mu / \lambda^{n + 1}.
$$
\end{proposition}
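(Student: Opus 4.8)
The plan is to repeat the argument of Proposition \ref{step2}, but one level up and localized to the single disk around $h_n(z)$ on which $g$ is well behaved: in place of the hypothesis $|f(z)-g(z)|\leq\mu$ one feeds in the inductive estimate $|h_n(\zeta)-h_{n-1}(\zeta)|<\mu/\lambda^{n}$ for $\zeta\in\Omega_n$, and instead of tracking the preimages through all $d$ of the disks produced by Lemma \ref{3.2} one only uses how $g$ behaves near $h_n(z)$. First I would collect the relevant points. Given $z\in\Omega_{n+1}$, the families $\{\Omega_k\}_{k\geq 0}$ and $\{\Omega_k(g)\}_{k\geq 0}$ are decreasing and $\Omega_{n+1}=f^{-1}(\Omega_n)$, so both $z$ and $f(z)$ lie in $\Omega_n$; hence $h_n(z)$, $h_n(f(z))$ and $h_{n-1}(f(z))$ are defined, $h_n(z)\in\Omega_n(g)\subset B$, and the conjugacy $h_{n-1}\circ f=g\circ h_n$ on $\Omega_n$ gives $g(h_n(z))=h_{n-1}(f(z))\in\Omega_{n-1}(g)\subset B$. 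Applying the inductive estimate at $f(z)\in\Omega_n$ then shows
$$
|h_n(f(z))-g(h_n(z))|=|h_n(f(z))-h_{n-1}(f(z))|<\mu/\lambda^{n}\leq\mu,
$$
so $h_n(f(z))\in\overline{D}_{\mu}(g(h_n(z)))$.

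Next I would linearize $g$ near $h_n(z)$. Since $h_n(z)\in g^{-1}(B)\subset B$ and $\overline{D}_{\delta}(h_n(z))\subset B$ contains no critical point of $g$, the argument of Propositions \ref{casethan2} and \ref{casethan1}, applied to $g$ at $h_n(z)$ with radius $\mu$, gives $|g(x)-g(y)|=|g'(h_n(z))|\cdot|x-y|$ for all $x,y\in\overline{D}_{\mu}(h_n(z))$; by Corollary \ref{bene} and Proposition \ref{open}, $g$ is therefore a homeomorphism from $\overline{D}_{\mu}(h_n(z))$ onto $\overline{D}_{\mu|g'(h_n(z))|}(g(h_n(z)))$. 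As $|g'(h_n(z))|\geq\lambda>1$, this target disk contains $\overline{D}_{\mu}(g(h_n(z)))$, hence contains $h_n(f(z))$, so there is a unique $w:=w(z)\in\overline{D}_{\mu}(h_n(z))$ with $g(w)=h_n(f(z))$; in particular $w\in g^{-1}(\{h_n\circ f(z)\})$.

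The asserted distance bound then falls out of the linearization identity together with the estimate above:
$$
|w-h_n(z)|=\frac{|g(w)-g(h_n(z))|}{|g'(h_n(z))|}=\frac{|h_n(f(z))-h_{n-1}(f(z))|}{|g'(h_n(z))|}<\frac{\mu/\lambda^{n}}{\lambda}=\frac{\mu}{\lambda^{n+1}}.
$$
Uniqueness in the statement is then immediate: if $w'\in g^{-1}(\{h_n\circ f(z)\})$ satisfies $|w'-h_n(z)|\leq\mu/\lambda^{n+1}<\mu$, then $w'\in\overline{D}_{\mu}(h_n(z))$, on which $g$ is injective, so $w'=w$.

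The step I expect to require the most care --- and the one the remark ``the proof is similar to the proof of Proposition \ref{step2}'' glosses over --- is extracting the sharp radius $\mu/\lambda^{n+1}$: simply knowing that $w$ lies in the relevant preimage disk only yields $|w-h_n(z)|\leq\mu/\lambda$, so one must use the exact scaled-isometry identity for $g$ on $\overline{D}_{\mu}(h_n(z))$ (Proposition \ref{casethan1}, available here precisely because $h_n(z)\in g^{-1}(B)$) together with the inductive estimate, and only then divide by $|g'(h_n(z))|\geq\lambda$. Everything else is a localized transcription of the base case.
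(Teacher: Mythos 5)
Your proof is correct and is exactly the localized transcription of Proposition \ref{step2} that the paper intends when it omits this argument: the inductive estimate $|h_n(f(z))-h_{n-1}(f(z))|<\mu/\lambda^{n}$ plays the role of $|f(z)-g(z)|\leq\mu$, and the scaled isometry of $g$ on $\overline{D}_{\mu}(h_n(z))$ (available because $h_n(z)\in g^{-1}(B)\subset B$ and $B$ contains no critical points of $g$) yields both the unique preimage and the contraction by $|g'(h_n(z))|\geq\lambda$. Your closing observation is also apt: the passage from the crude bound $\mu/\lambda$ to the sharper $\mu/\lambda^{n+1}$ is precisely the detail hidden behind the paper's ``the proof is similar to the proof of Proposition \ref{step2}'', and you supply it correctly via the exact identity $|w-h_n(z)|=|g(w)-g(h_n(z))|/|g'(h_n(z))|$.
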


Let us define $h_{l + 1} : \Omega_{l + 1} \rightarrow \Omega_{l + 1}(g)$ by $w := w(z)$ determined in Proposition \ref{step2p2}.
By the construction of $h_{l + 1} : \Omega_{l + 1} \rightarrow \Omega_{l + 1}(g)$, it is clear that $h_{l + 1} : \Omega_{l + 1} \rightarrow \Omega_{l + 1}(g)$ is continuous and open on $\Omega_{l + 1}$. 
Moreover, it satisfies $h_{l} \circ f = g \circ h_{l + 1}$ on $\Omega_{l + 1}$.
Furthermore, one can construct the inverse map of $h_{l+1} : \Omega_{l + 1} \rightarrow \Omega_{l + 1}(g)$ so $h_{l + 1} : \Omega_{l + 1} \rightarrow \Omega_{l + 1}(g)$ is a homeomorphism.

Therefore, we can get a family $\{ h_{k} : \Omega_{k} \rightarrow \Omega_{k}(g) \}_{k \geq 0}$ of homeomorphisms satisfying
$$
|h_{k + 1}(z) - h_{k}(z)| < \mu / \lambda^{k + 1} \quad \text{and} \quad h_k \circ f = g \circ h_{k + 1}
$$
on each $\Omega_{k + 1}$.

\paragraph{$\bullet$ The construction of the limit maps $ h_\infty : \Omega_\infty \rightarrow \Omega_{\infty}(g)$.}

We define the limit map $h_{\infty}$ of a family $\{ h_{k} : \Omega_k \rightarrow \Omega_{k}(g) \}_{k \geq 0}$ of the homomorphisms by
\begin{align*}
h_\infty : \Omega_\infty &\rightarrow \Omega_{\infty}(g) \\
z &\mapsto \lim_{k \rightarrow \infty} h_k(z).
\end{align*}

Then we have the following proposition.

\begin{proposition}\label{step2p3}
The limit map $h_\infty : \Omega_{\infty} \rightarrow \Omega_{\infty}(g)$ is well-defined.
Moreover, it is a homeomorphism.
Furthermore, it satisfies $h_{\infty} \circ f = g \circ h_{\infty}$ 
on $\Omega_{\infty}$ and $h_\infty(\mathcal{J}(f)) = \mathcal{J}(g).$
\end{proposition}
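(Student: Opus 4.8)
The plan is to realize $h_\infty$ as a uniform limit of the finite-level maps $h_k$, to push the finite-level structure (the conjugacy $h_k\circ f=g\circ h_{k+1}$ and the fact that each $h_k$ is a homeomorphism) through the limit, and finally to identify $\Omega_\infty$ with $\mathcal{J}(f)$ and $\Omega_\infty(g)$ with $\mathcal{J}(g)$, after which $h_\infty(\mathcal{J}(f))=\mathcal{J}(g)$ is immediate.

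First I would establish well-definedness and continuity. For $z\in\Omega_\infty$ the uniform estimate $|h_{k+1}(z)-h_k(z)|<\mu/\lambda^{k+1}$ together with the ultrametric inequality makes $(h_k(z))_k$ a Cauchy sequence, which converges because $\mathbb{C}_p$ is complete. Condition $(4)$ of Theorem \ref{2.2} forces $B$ to be closed (a $|\cdot|$-limit of points of $B$ eventually lies in some $\overline{D}_\delta(z_n)\subset B$), hence each $\Omega_k(g)=g^{-k}(B)$ is closed and $\lim_k h_k(z)\in\bigcap_k\Omega_k(g)=\Omega_\infty(g)$. The convergence $h_k\to h_\infty$ is uniform in $z$, so $h_\infty$ is continuous. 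For the conjugacy, note $f^{-1}(\Omega_\infty)=\bigcap_k f^{-1}(\Omega_k)=\bigcap_k\Omega_{k+1}=\Omega_\infty$, and since $f$ is onto also $f(\Omega_\infty)=\Omega_\infty$ (likewise $g$ on $\Omega_\infty(g)$); then for $z\in\Omega_\infty$, passing to the limit in $g(h_{k+1}(z))=h_k(f(z))$ using continuity of $g$ and $f(z)\in\Omega_\infty$ yields $g\circ h_\infty=h_\infty\circ f$ on $\Omega_\infty$.

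The core of the argument is bijectivity. Because $f\in\mathcal{S}$ and the hypothesis $|f-g|\le\mu$ on $B$ is symmetric in $f$ and $g$, I would re-run the entire Step $2$ construction with the roles of $f$ and $g$ interchanged, obtaining homeomorphisms $h_k':\Omega_k(g)\to\Omega_k$ with the analogous estimates and conjugacy, hence a continuous limit $h_\infty':\Omega_\infty(g)\to\Omega_\infty$; the uniqueness clause of Proposition \ref{step2p2} then gives $h_k'=h_k^{-1}$ at every finite level. The key fact is that each $h_k$ is an isometry on each of the disjoint ``branch disks'' whose union is $\Omega_k$. This rests on the observation that $|f'|\equiv|g'|$ on $B$: since $|g-f|\le\mu$ on each disk $\overline{D}_\delta(z)\subset B$, the $p$-adic maximum principle (Theorem \ref{weierstrass}) bounds $|(g-f)'(z)|$ by $\mu/\delta\le 1<\lambda\le|f'(z)|$, so Proposition \ref{nonarchi} gives $|g'(z)|=|f'(z)|$; consequently the $f$- and $g$-branch maps of Lemma \ref{3.2} scale distances by the identical factor, and an induction on $k$ transfers the isometry property to $h_k$. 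Finally, using the sharp form $|h_{k+1}(z)-h_k(z)|\le\mu/\prod_{j=0}^{k}|f'(f^j(z))|$ of the estimate — whose right side is exactly the radius of the level-$(k{+}1)$ branch disk — one checks that $h_\infty'(w)$ and all $h_m'(w)$ with $m\ge k$ lie in a single branch disk of $\Omega_k$ on which $h_k$ is isometric, so $|h_k(h_\infty'(w))-w|=|h_k(h_\infty'(w))-h_k(h_k'(w))|=|h_\infty'(w)-h_k'(w)|\to 0$; hence $h_\infty\circ h_\infty'=\mathrm{id}$, and symmetrically $h_\infty'\circ h_\infty=\mathrm{id}$, so $h_\infty$ is a homeomorphism with inverse $h_\infty'$.

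It remains to prove $\Omega_\infty=\mathcal{J}(f)$ and $\Omega_\infty(g)=\mathcal{J}(g)$, which finishes the proof since $h_\infty$ carries $\Omega_\infty$ bijectively onto $\Omega_\infty(g)$. The inclusion $\mathcal{J}(g)\subset\Omega_\infty(g)$ follows from the second part of Theorem \ref{montel}: as $g^{-1}(B)\subset B$ with $B$ non-empty and closed we get $\mathcal{J}(g)\subset B=\Omega_0(g)$, and total invariance of $\mathcal{J}(g)$ propagates this to $\mathcal{J}(g)\subset\Omega_k(g)$ for all $k$ (similarly $\mathcal{J}(f)\subset\Omega_\infty$). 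For the reverse inclusion, fix $z^*\in\Omega_\infty(g)$, so $g^n(z^*)\in B$ for all $n$, each $g^n(z^*)$ lying in $g^{-1}(B)$; iterating Proposition \ref{casethan1} along the orbit produces disks $W_n\ni z^*$ of $|\cdot|$-radius $\mu/\prod_{j=0}^{n-1}|g'(g^j(z^*))|\le\mu\lambda^{-n}\to 0$ with $g^n:W_n\to\overline{D}_\mu(g^n(z^*))$ a homeomorphism. Since $\overline{D}_\mu(g^n(z^*))$ has $\rho$-diameter bounded below by the fixed constant $\mu/\max\{M,1\}^{2}>0$ while $W_n$ shrinks to $z^*$, the family $\{g^n\}$ cannot be equicontinuous at $z^*$, so $z^*\in\mathcal{J}(g)$; the same computation gives $\Omega_\infty\subset\mathcal{J}(f)$, and the proof is complete. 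I expect the last step — converting immediate expansion into a failure of equicontinuity, where genuinely $p$-adic input enters — to be the main obstacle, with the branch-disk bookkeeping in the bijectivity step a close second.
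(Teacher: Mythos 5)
Your argument is correct, and on the two substantive points it takes a genuinely different route from the paper's. For the identification of the image with $\mathcal{J}(g)$, the paper does not prove $\Omega_\infty=\mathcal{J}(f)$; instead it produces a repelling periodic point $P$ of $f$ inside $B$ (Lemma \ref{step2l1}, using Hsia's theorem that $\mathcal{J}(f)$ lies in the closure of the periodic points), observes that $h_\infty(P)$ is then a repelling periodic point of $g$ (whence $\mathcal{J}(g)\neq\emptyset$), and applies Theorem \ref{montel} to write $\mathcal{J}(f)=\overline{\bigcup_k \tilde f^{-k}(P)}$ and $\mathcal{J}(g)=\overline{\bigcup_k \tilde g^{-k}(h_\infty(P))}$, which $h_\infty$ intertwines. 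Your equicontinuity argument proves the stronger statements $\Omega_\infty=\mathcal{J}(f)$ and $\Omega_\infty(g)=\mathcal{J}(g)$, avoids the density-of-periodic-points input entirely, and still yields $\mathcal{J}(g)\neq\emptyset$ since $\Omega_\infty(g)=h_\infty(\Omega_\infty)\supset h_\infty(\mathcal{J}(f))$; the price is the chordal-versus-absolute-value bookkeeping, which you handle correctly because $B\subset\overline{D}_M(0)$ makes $\rho$ and $|\cdot|$ comparable there. For injectivity the paper is in fact far terser than you are: it asserts that the uniform limit of the $h_k$ is a homeomorphism ``since $\Omega_\infty$ and $\Omega_\infty(g)$ are closed,'' which as stated is not an argument. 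Your symmetric construction of $h_\infty'$ by exchanging the roles of $f$ and $g$ (legitimate, since $f,g\in\mathcal{S}$ and $|f-g|\le\mu$ on $B$ is a symmetric hypothesis), combined with the observation that $|g'|=|f'|$ on $B$ makes each $h_k$ an isometry on the branch disks of Lemma \ref{3.2}, supplies the missing verification that $h_k'=h_k^{-1}$ propagates to the limit; I would keep this. The remaining pieces --- Cauchyness from the ultrametric inequality, closedness of $B$ from condition $(4)$, and passing the conjugacy relation to the limit by continuity of $g$ --- coincide with the paper's proof.
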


\begin{proof}[Proof of Proposition \ref{step2p3}]

The sequence $\{ h_k : \Omega_{k} \rightarrow \Omega_{k}(g) \}_{k \geq 0}$ of homeomorphisms is convergent since it is Cauchy.
Moreover, the limit $h_{\infty} : \Omega_{\infty} \rightarrow \Omega_{\infty}(g)$ must be a homeomorphism since $\Omega_{\infty}$ and $\Omega_{\infty}(g)$ are closed subsets in $\mathbb{C}_p$.

Since every polynomial maps over $\mathbb{C}_p$ is continuous, we obtain that 
\begin{align*}
h_{\infty} \circ f(z) 
&= \lim_{k \rightarrow \infty}h_{k} \circ f(z)
= \lim_{k \rightarrow \infty}g \circ h_{k}(z) \\
&= g \circ \lim_{k \rightarrow \infty} h_{k}(z)
= g \circ h_{\infty} (z).
\end{align*}

Finally, let us check that $h_\infty(\mathcal{J}(f)) = \mathcal{J}(g)$.
To show that we use the notion of {\it repelling periodic point}.
We call a point $P$ in $\mathbb{C}_p$ {\it a repelling periodic point of $f$} if there exists a $m \in \mathbb{N}$ such that $f^{m}(P) = P$ and $|(f^{m})'(P)| > 1$.
It is well known that every repelling periodic point is in the Julia set.
See \cite[Proposition $5.20$ $(2)$]{Silv07}.

Let us begin with the existence of a repelling periodic point of $f$.
\begin{lemma}\label{step2l1}
There exists a repelling periodic point $P$ of $f$ in $B$.
\end{lemma}
\begin{proof}[Proof of Lemma \ref{step2l1}]
It is well-known that the Julia set $\mathcal{J}(f)$ of $f$ is contained in the closure of the set of periodic points of $f$ with respect to the chordal metric. 
See \cite[Theorem 3.1]{Hs00} or \cite[Theorem 5.37]{Silv07}.
Since $B$ is bounded, we have that $\mathcal{J}(f)$ is contained in the closure of the set of periodic points of $f$ with respect to $|\cdot|$.
In particular, for any $z \in \mathcal{J}(f)$, there exists a periodic point $P$ of $f$ in $\overline{D}_{\delta}(z) \subset B$.
Let us denote the minimal period of $P$ by $m$ and show that the set $\{P, f(P), \cdots, f^{m-1}(P) \}$ is invariant under $f$.
Indeed, since $B$ is backward invariant under $f$ and $f^{k}(f^{m-k}(P)) = f^{m}(P) = P$, we have $f^{m - k}(P) \in f^{-k}(\{ P \}) \subset B$ for each $k$ in $\{1, 2, \cdots, m-1 \}$. 
Since $f$ is immediately expanding on $B$, we have 
$|(f^{m})'(P)| \geq \lambda^m > 1$.
\end{proof}

Let $P$ be a repelling periodic point of $f$ in $B$ so $P$ is contained in the Julia set $\mathcal{J}(f)$ of $f$.
Thus, we obtain that $P$ is in $\Omega_{\infty}$.
Moreover, we have that $g^{m}(h_{\infty}(P)) = h_{\infty}(f^{m}(P)) = h_{\infty}(P)$ where $m$ is the minimal period of $P$.
In particular, this implies that $h_{\infty}(P)$ is a periodic point of $g$.
Since $h_{\infty}(P)$ is an element of $B$ and $g$ is immediately expanding on $B$, we have $h_{\infty}(P)$ is a repelling periodic point of $g$ so $\mathcal{J}(g)$ is non-empty.

Moreover, by Proposition \ref{montel}, we see that
\begin{align*}
h_\infty(\mathcal{J}(f)) 
&= h_\infty(\mathcal{J}(\tilde{f})) 
= h_\infty(\overline{\bigcup_{k \geq 0} {\tilde{f}}^{-k}(\{ P \})}) \\
&= \overline{\bigcup_{k \geq 0} h_{\infty} \circ {\tilde{f}}^{-k}(\{ P \})}
= \overline{\bigcup_{k \geq 0} \tilde{g}^{-k} (\{ h_{\infty} (P) \})} \\
&= \mathcal{J}(\tilde{g})
= \mathcal{J}(g)
\end{align*}
where $\tilde{f} := f^{m}$ and $\tilde{g} := g^{m}$.
\end{proof}

\paragraph{Step $3$.}
In this step, we will summarise our discussion and conclude that $f$ is $J$-stable.
Let $U$ be the set of polynomial maps $b_d \cdot z^d + b_{d-1} \cdot z^{d-1} + \cdots + b_0$ in $Poly_d(\mathbb{C}_p)$ satisfying 
$$
\max_{k \in \{0, 1, \cdots, d \}} \{ |a_k - b_k| \} < \min_{k' \in \{d, d-1, \cdots, 0 \}}\{\tau(k') \}
$$ 
where $f(z) = a_d \cdot z^d + a_{d-1} \cdot z^{d-1} + \cdots + a_0$ and we consider $U$ as a neighborhood of $f$ in $Poly_d(\mathbb{C}_p)$.
Note that if $g$ be an element in $U$ then there exists a $\{ \epsilon_0, \epsilon_1, \cdots, \epsilon_d \} \subset \mathbb{C}_p$ such that 
$$
g(z) = T_{d, \epsilon_d} \circ T_{d-1, \epsilon_{d-1}} \circ \cdots \circ T_{0, \epsilon_{0}}(f) (z).
$$
By Proposition \ref{step3}, we have 
$$
|g(z) - f(z)| = |T_{d, \epsilon_d} \circ T_{d-1, \epsilon_{d-1}} \circ \cdots \circ T_{0, \epsilon_{0}}(f)(z) - f(z)| \leq \mu
$$
for any $z \in B$.
Therefore, by Step $2$, there exists a homeomorphism 
$$
h_\infty : \mathcal{J}(f) \rightarrow \mathcal{J}(g)
$$
such that $h_\infty \circ f = g \circ h_{\infty}$ on $\mathcal{J}(f)$.


\end{proof}

\subsection{Proof of Theorem \ref{2.3}}

Finally, we prove Theorem \ref{2.3}.
\begin{proof}[Proof of Theorem \ref{2.3}]

Let us fix $c$ in $\mathbb{C}_p$ with $|c| > 1$. 
The following lemma is essential in this proof.

\begin{lemma}\label{lemma1.3}
The Julia set $\mathcal{J}(f_c)$ of $f_c$ is non-empty. Moreover, it is contained in $\{z \in \mathbb{C}_p \mid |z| = |c|^{1/d} \}$. In particular, this implies that $f_c$ is immediately expanding.
\end{lemma}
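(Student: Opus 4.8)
The plan is to locate $\mathcal{J}(f_c)$ precisely on the ``sphere'' $S := \{ z \in \mathbb{C}_p \mid |z| = |c|^{1/d} \}$, using only the strong triangle inequality (Proposition \ref{nonarchi}) together with Theorem \ref{montel}. Two standing observations: since $p \nmid d$ we have $|d| = 1$, so $|f_c'(z)| = |d z^{d-1}| = |z|^{d-1}$ for every $z \in \mathbb{C}_p$; and since $|c| \in |\mathbb{C}_p^{\times}|$, the value $|c|^{1/d}$ again lies in $|\mathbb{C}_p^{\times}|$, so $S \neq \emptyset$. (The hypothesis $|c| > 1$ is the standing assumption of this subsection; it is forced by $\lim_k |f_c^k(0)| = \infty$, as otherwise $|f_c^k(0)| \le 1$ for all $k$ by induction.)

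First I would prove $\mathcal{J}(f_c) \subset S$. The set $S$ is bounded and closed in $\mathbb{C}_p$, and it is backward invariant under $f_c$: if $|w| = |c|^{1/d}$, then a preimage $z$ solves $z^d = w - c$, and $|w - c| = \max\{|w|, |c|\} = |c|$ by Proposition \ref{nonarchi} (here $|w| = |c|^{1/d} < |c|$), so $|z| = |c|^{1/d}$, i.e. $z \in S$. Hence $f_c^{-1}(S) \subset S$, and the second part of Theorem \ref{montel} gives $\mathcal{J}(f_c) \subset S$. (One can equivalently check directly that $\{ |z| > |c|^{1/d} \}$, $\{ |z| < |c|^{1/d} \}$, and $\{ \infty \}$ all lie in $\mathcal{F}(f_c)$: on the first region $|f_c(z)| = |z|^d > |z|$, so the region is forward invariant and $|f_c^n(z)| = |z|^{d^n} \to \infty$; and the second region is mapped into the first, since there $|f_c(z)| = |c| > |c|^{1/d}$.)

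Next I would show $\mathcal{J}(f_c) \neq \emptyset$ by exhibiting a repelling fixed point on $S$. The fixed-point equation $z^d - z + c = 0$ has a root $z^*$ in $\mathbb{C}_p$ since $\mathbb{C}_p$ is algebraically closed, and any such root satisfies $|z^*| = |c|^{1/d}$: if $|z^*| > |c|^{1/d}$ then $|z^{*d}| = |z^*|^d$ strictly dominates both $|z^*|$ and $|c|$, so $|z^{*d} - z^* + c| = |z^*|^d \neq 0$; if $|z^*| < |c|^{1/d}$ then $|z^{*d}|$ and $|z^*|$ are both $< |c|$, so $|z^{*d} - z^* + c| = |c| \neq 0$; in either case $z^*$ is not a root. (Alternatively this follows from Theorem \ref{weierstrass} applied with $r = |c|^{1/d}$ and $z_0 = 0$.) At $z^*$ we have $|f_c'(z^*)| = |z^*|^{d-1} = |c|^{(d-1)/d} > 1$ because $|c| > 1$ and $d \ge 2$, so $z^*$ is a repelling fixed point and therefore lies in $\mathcal{J}(f_c)$ by \cite[Proposition 5.20(2)]{Silv07}.

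Finally, combining the two steps: $\mathcal{J}(f_c)$ is a non-empty subset of $S$, and $|f_c'(z)| = |z|^{d-1} = |c|^{(d-1)/d}$ is a constant strictly greater than $1$ on all of $S$, so $f_c$ is immediately expanding on $\mathcal{J}(f_c)$ with $\lambda = |c|^{(d-1)/d}$; by definition this says $f_c$ is immediately expanding. The only genuinely non-formal point is the non-emptiness of $\mathcal{J}(f_c)$ — that is, ruling out the degenerate behaviour of $z \mapsto z^d$ — and it is dispatched by the explicit repelling fixed point above; everything else is a string of ultrametric estimates in which the hypothesis $p \nmid d$ enters precisely to guarantee $|d| = 1$.
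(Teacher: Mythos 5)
Your argument is correct and follows essentially the same route as the paper: backward invariance of the sphere $\{|z| = |c|^{1/d}\}$ plus Theorem \ref{montel} to locate the Julia set, then an ultrametric case analysis on the roots of $z^d - z + c$ to produce a repelling fixed point, and the computation $|f_c'(z)| = |c|^{(d-1)/d} > 1$ (using $p \nmid d$) for immediate expansion. You in fact supply a few details the paper leaves to the reader, such as the explicit verification that $f_c^{-1}(S) \subset S$ and that the escape hypothesis forces $|c| > 1$.
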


\begin{proof}[Proof of Lemma \ref{lemma1.3}]
To ease notation, we shall use $U$ to denote the set $\{z \in \mathbb{C}_p \mid |z| = |c|^{1/d } \}$.
It is trivial that $U$ is non-empty, bounded, and closed.
Moreover, one can easily check that $f_{c}^{-1}(U) \subset U$.
As an application of Montel's theorem, see \cite[Theorem $5.27$]{Silv07}, one can check that $U$ contains $\mathcal{J}(f_c)$.
Now let us find a fixed point of $f$ in $U$. 
Considering the polynomial map $g(z) : = f_c(z) - z = z^d - z + c$, we obtain that there exists $d$ zeros in $\mathbb{C}_p$, counted with multiplicity.
Let us choose an arbitrary zero of $g$ and we denote it by $w$.
Let us show that $|w|$ must be equal to $|c|^{1/d}$.
Indeed, if we assume that $|w| < |c|^{1 / d }$,  it follows from Proposition \ref{nonarchi} that 
$$
0 = |g(w)| = |w^d - w + c| = |c| \neq 0. 
$$
On the other hand, if we assume that $|w| > |c|^{1/ d }$, it also follows from Proposition \ref{nonarchi} that
$$
0 = |g(w)| = |w^d - w + c| = |w|^d > |c| \neq 0.
$$
Both of them implies  a contradiction so $|w|$ must be equal to $|c|^{1/d}$. 
Moreover, it is clear that for any $z$ in $U$, we have 
$|f_c'(z)| = |d \cdot z^{d-1}| = |c|^{(d - 1) / d} > 1$.
This implies $w$ is a repelling fixed point so $\mathcal{J}(f_c)$ is non-empty. 
See \cite[Proposition $5.20$ (2)]{Silv07}.
\end{proof}

Lemma \ref{lemma1.3} implies that it satisfies the condition of Theorem \ref{2.2}.
\end{proof}



\section{An application of the main theorems}

In this section, we will see an application of Theorem \ref{2.3}. 
We first recall the symbolic dynamical system. 
After then, we will see the Julia sets of some polynomial maps corresponding to the symbolic dynamical system.

\subsection{The symbolic dynamical system}

We define {\it the sequence space} $\Sigma$ by 
$$
\{ (s_0s_1\cdots) \mid \forall i \geq 0, s_i = 0 \text{ or } 1 \},
$$
and {\it the shift map $\sigma$ on $\Sigma$} by 
$$
\sigma (s_0s_1\cdots) = (s_1s_2\cdots).
$$
We consider 
$$
d(s, t) = \sum_{k=0}^{\infty} {|s_k - t_k | \over 2^{k} }
$$
for any elements $s = (s_0s_1\cdots)$ and $t =(t_0t_1\cdots)$ as a metric on $\Sigma$.
One can easily check the that the symbolic space $\Sigma$ is disconnected and compact with respect to the metric $d$. Moreover, the shift map $\sigma$ is continuous with respect to $d$.

\subsection{Julia sets with symbolic dynamics}
\begin{proposition}\label{4.1}
Let $p$ be an odd prime number and $F$ be a polynomial map over $\mathbb{C}_p$ defined by
$$
F(z) := {z (z - 1) \over p}.
$$
Then, the Julia set $\mathcal{J}(F)$ of $F$ is non-empty. 
Moreover, there exists a homeomorphism $h : \mathcal{J}(F) \rightarrow \Sigma$ such that 
$$
h \circ F= \sigma \circ h
$$
on $\mathcal{J}(F)$.
In particular, $\mathcal{J}(F)$ is compact.
\end{proposition}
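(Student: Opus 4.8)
The plan is to analyze the polynomial $F(z) = z(z-1)/p$ directly, mirroring the analysis of $f_c(z) = z^d + c$ in Lemma~\ref{lemma1.3} and then coding the dynamics combinatorially. First I would locate the Julia set. The two obvious candidates for ``trapped'' points are neighborhoods of $0$ and of $1$, since $F(0) = F(1) = 0$ and $F$ maps small disks around these points back near $0$. More precisely, I would show that the set $K := \overline{D}_{\delta}(0) \cup \overline{D}_{\delta}(1)$ for a suitable $\delta$ (with $|p| < \delta \le 1$, say $\delta$ slightly less than $1$) is backward invariant in the sense that $F^{-1}(K) \subset K$: if $|F(z)| \le \delta$ then $|z(z-1)| = |p|\,|F(z)| \le |p|\delta < \delta$, which forces $|z| \le \delta$ or $|z-1|\le \delta$ by Proposition~\ref{nonarchi}. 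Then by Theorem~\ref{montel} (second part) $\mathcal{J}(F) \subset K$. To see $\mathcal{J}(F)$ is non-empty and $F$ is immediately expanding on a neighborhood of it, I would check that $F$ has a repelling fixed point: $F(z) = z$ gives $z^2 - (p+1)z = 0$, so $z = 0$ or $z = p+1$; at $z = 0$ we have $F'(0) = -1/p$, so $|F'(0)| = |p|^{-1} = p > 1$, a repelling fixed point, hence $\mathcal{J}(F) \ne \emptyset$ by \cite[Proposition~5.20~(2)]{Silv07}. A short computation shows $|F'(z)| = |2z-1|/|p| \ge |p|^{-1} > 1$ on all of $\overline{D}_{\delta}(0) \cup \overline{D}_{\delta}(1)$ (using $|2z-1| = 1$ there since $p$ is odd and $|z|, |z-1| \le \delta \le 1$), so $F$ is immediately expanding.

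Next I would set up the combinatorial coding. The key geometric fact is that $F$ restricted to each of $\overline{D}_{\delta}(0)$ and $\overline{D}_{\delta}(1)$ is a bijection onto a disk containing $\overline{D}_{\delta}(0) \cup \overline{D}_{\delta}(1)$. Concretely, applying Corollary~\ref{bene} (or Lemma~\ref{3.2} with $B$ a suitable closed disk around $0$), $F|\overline{D}_{\delta}(0)$ and $F|\overline{D}_{\delta}(1)$ are each injective with the same scaling factor $|p|^{-1}$; since $|p|^{-1}\delta \cdot |p| = \delta$... more carefully, the image of $\overline{D}_{\delta}(0)$ under $F$ is a disk of radius $\delta/|p| = \delta p > \delta$ centered at $0$, which therefore contains both $\overline{D}_{\delta}(0)$ and $\overline{D}_{\delta}(1)$ (the latter since $|1 - 0| = 1 \le \delta p$). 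Define $\mathcal{J}(F) = \Omega_\infty := \bigcap_{k\ge 0} \Omega_k$ where $\Omega_0 = \overline{D}_{\delta}(0) \cup \overline{D}_{\delta}(1)$ and $\Omega_{k+1} = F^{-1}(\Omega_k)$; by the covering property each $\Omega_k$ is a disjoint union of $2^{k+1}$ closed disks of radius $\delta |p|^k$, indexed by finite binary words of length $k+1$ recording which branch ($0$ or $1$) each iterate lands in. Then $\bigcap_k \Omega_k$ is naturally in bijection with $\Sigma$: a point $z \in \Omega_\infty$ is sent to the sequence $h(z) = (s_0 s_1 \cdots)$ where $s_i = 0$ if $F^i(z) \in \overline{D}_{\delta}(0)$ and $s_i = 1$ if $F^i(z) \in \overline{D}_{\delta}(1)$.

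Finally I would verify that $h$ is the desired conjugacy. Surjectivity follows because the nested disks $\overline{D}_{\delta|p|^k}(\cdot)$ corresponding to any prescribed word of length $k+1$ are nonempty and nested, and $\mathbb{C}_p$ is complete so their intersection is a single point (radii $\to 0$). Injectivity and continuity both follow from the ultrametric estimate: if $h(z)$ and $h(w)$ agree on the first $k+1$ coordinates then $z, w$ lie in a common disk of radius $\delta|p|^k$, so $|z - w| \le \delta |p|^k \to 0$; conversely if $|z-w|$ is small then $z,w$ lie in the same small disk and hence $h(z), h(w)$ agree on a long initial segment, giving $d(h(z),h(w))$ small. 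Continuity of $h^{-1}$ is the same estimate read the other way, and since $\Omega_\infty$ is compact (closed and bounded in $\mathbb{C}_p$, being a nested intersection of finite unions of closed disks — here I'd invoke the third disk property from \S2.2) the map $h$ is a homeomorphism, so in particular $\mathcal{J}(F) = \Omega_\infty$ is compact. The conjugacy relation $h \circ F = \sigma \circ h$ is immediate from the definition of the coding: $F$ shifts the itinerary. I expect the main obstacle to be the bookkeeping in establishing the precise covering/bijectivity statement — that $F$ maps each of the two disks bijectively onto a disk engulfing both — and pinning down the correct $\delta$; once that is in place the symbolic coding and the homeomorphism verification are routine ultrametric arguments. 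A cleaner alternative for the existence and expansion parts is simply to cite Theorem~\ref{2.2} with $B = \overline{D}_\delta(0) \cup \overline{D}_\delta(1)$, though the symbolic description still requires the explicit branch analysis above.
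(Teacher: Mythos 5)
The paper does not actually prove this proposition: it simply refers the reader to \cite[Corollary~5.25]{Silv07}, where precisely this map $(z^2-z)/p$ is treated. Your proposal reconstructs, essentially from scratch, the standard argument behind that citation: locate a backward-invariant union of two disjoint disks $K=\overline{D}_\delta(0)\cup\overline{D}_\delta(1)$ with $|p|<\delta<1$, check that $F$ maps each disk bijectively (with scaling factor $p$) onto a disk engulfing both, and code points of $\bigcap_k F^{-k}(K)$ by their itineraries. All of the computations you indicate are correct: the fixed point $0$ is repelling since $|F'(0)|=p$; $|2z-1|=1$ on $K$ because $p$ is odd; the backward-invariance case analysis goes through via Proposition~\ref{nonarchi} (the key point being that $|z|$ and $|z-1|$ cannot both be $<1$); and the nested-disk/ultrametric verification that the itinerary map is a homeomorphism onto $\Sigma$ intertwining $F$ with $\sigma$ is routine. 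So your route is a legitimate, self-contained alternative to the paper's citation, at the cost of the branch bookkeeping you anticipate.

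The one genuine gap is the phrase ``Define $\mathcal{J}(F)=\Omega_\infty$.'' The Julia set is already defined as the complement of the Fatou set, so the equality $\mathcal{J}(F)=\Omega_\infty$ is something to be \emph{proved}, not decreed. The inclusion $\mathcal{J}(F)\subset\Omega_\infty$ is cheap: Theorem~\ref{montel} gives $\mathcal{J}(F)\subset K$ once you know $\mathcal{J}(F)\neq\emptyset$ and $F^{-1}(K)\subset K$, and total invariance then gives $\mathcal{J}(F)=F^{-k}(\mathcal{J}(F))\subset F^{-k}(K)=\Omega_k$ for every $k$. The reverse inclusion $\Omega_\infty\subset\mathcal{J}(F)$ needs an argument: for instance, each periodic itinerary produces (by the contraction of the inverse branches, or by Theorem~\ref{weierstrass}) a periodic point of $F$ in $\Omega_\infty$, which is repelling because $|F'|=p$ on $K$; these are dense in $\Omega_\infty$ in the itinerary topology, hence in the metric topology, and since repelling periodic points lie in the closed set $\mathcal{J}(F)$ you conclude $\Omega_\infty\subset\mathcal{J}(F)$. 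Without some such step your homeomorphism $h$ is only defined on $\Omega_\infty$, and the statement about $\mathcal{J}(F)$ (including its compactness) does not yet follow.
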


We refer \cite[Corollary $5.25$]{Silv07} to readers.

\begin{corollary}\label{4.2}
Let $p$ be a prime number and consider a family of polynomial maps 
$$
f_{c}(z) = z^2 + c
$$
where $c \in \mathbb{C}_p$.
If a parameter $c \in \mathbb{C}_p$ satisfies that
$$
|c - \gamma | < p
$$
where $\gamma := - 1 / (2p) - 1 / (4p^2) $,
then there exists a homeomorphism $h : \mathcal{J}(f_c) \rightarrow \Sigma$ such that 
$$
h \circ f_c = \sigma \circ h
$$
on the Julia set $\mathcal{J}(f_c)$ of $f_c$.
In particular, $\mathcal{J}(f_c)$ is compact.
\end{corollary}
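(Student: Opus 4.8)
The plan is to realise the desired conjugacy as a composition of maps, factoring through the polynomial map $F(z)=z(z-1)/p$ of Proposition \ref{4.1}. The key observation is that $\gamma$ has been chosen precisely so that $f_\gamma(z)=z^2+\gamma$ is affinely conjugate to $F$; granting this, Proposition \ref{4.1} already identifies the dynamics of $f_\gamma$ on its Julia set with the shift $(\Sigma,\sigma)$, and it then remains only to transport this identification from the parameter $\gamma$ to an arbitrary parameter $c$ with $|c-\gamma|<p$ by means of Theorem \ref{2.3}. Throughout I take $p$ to be odd, which is needed both to invoke Proposition \ref{4.1} and to apply Theorem \ref{2.3} with $d=2$.

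First I would record the affine conjugacy. Writing $F(z)=\tfrac1p z^2-\tfrac1p z$ and setting $\psi(z):=\tfrac1p z-\tfrac1{2p}$ (a well-defined affine automorphism of $\hat{\mathbb{C}}_p$, since $\tfrac12\in\mathbb{C}_p$ for $p$ odd), a direct computation gives $\psi\circ F\circ\psi^{-1}(w)=w^2-\tfrac1{2p}-\tfrac1{4p^2}=f_\gamma(w)$. Since the Fatou and Julia sets are invariant under conjugation by an affine automorphism of $\hat{\mathbb{C}}_p$, one obtains $\mathcal{J}(f_\gamma)=\psi(\mathcal{J}(F))\neq\emptyset$ together with a homeomorphism $h_2:\mathcal{J}(f_\gamma)\to\Sigma$ satisfying $h_2\circ f_\gamma=\sigma\circ h_2$, namely $h_2$ is the homeomorphism of Proposition \ref{4.1} precomposed with $\psi^{-1}$.

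Next I would check that Theorem \ref{2.3} applies in degree $d=2$ to the two parameters $c$ and $\gamma$. The only arithmetic input needed is $|\gamma|=|(2p+1)/(4p^2)|=p^2$, using that $2p+1$ is a unit at $p$ and $|4|=1$. From $|c-\gamma|<p<p^2=|\gamma|$ the ultrametric inequality forces $|c|=|\gamma|=p^2>1$; this gives $|f_c^{k}(0)|=|c|^{2^{k-1}}$ for all $k\ge 1$, hence $\lim_{k\to\infty}|f_c^k(0)|=\infty$, and likewise $\lim_{k\to\infty}|f_\gamma^k(0)|=\infty$. Moreover $|c-\gamma|<p=|\gamma|^{1/2}=|c|^{1/2}$, so the closeness hypothesis $|c-\gamma|\le|c|^{1/d}$ of Theorem \ref{2.3} holds. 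Theorem \ref{2.3} then yields a homeomorphism $h_1:\mathcal{J}(f_c)\to\mathcal{J}(f_\gamma)$ with $h_1\circ f_c=f_\gamma\circ h_1$ on $\mathcal{J}(f_c)$.

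Finally I would compose: the map $h:=h_2\circ h_1:\mathcal{J}(f_c)\to\Sigma$ is a homeomorphism and $h\circ f_c=h_2\circ f_\gamma\circ h_1=\sigma\circ h_2\circ h_1=\sigma\circ h$ on $\mathcal{J}(f_c)$, while compactness of $\mathcal{J}(f_c)$ follows from that of $\Sigma$ through $h$. I do not expect a genuine obstacle here: the argument is an assembly of Proposition \ref{4.1}, the standard invariance of Julia sets under affine conjugacy, and Theorem \ref{2.3}. The one point deserving care is the quantitative bookkeeping of the third paragraph, namely verifying that the corollary's hypothesis $|c-\gamma|<p$ is exactly what is required to meet Theorem \ref{2.3}'s hypothesis $|c-\gamma|\le|c|^{1/d}$; this reduces to the computation $|\gamma|=p^2$, which is what dictates the radius $p$ in the statement.
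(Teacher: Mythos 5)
Your proof is correct and follows essentially the same route as the paper: conjugate $f_c$ to $f_\gamma$ via Theorem \ref{2.3}, relate $f_\gamma$ to $F$ by the affine map $z\mapsto pz+\tfrac12$ (your $\psi^{-1}$), and then invoke Proposition \ref{4.1}, composing the three homeomorphisms. If anything you are more careful than the paper, since you explicitly verify the escape hypothesis $\lim_k|f_c^k(0)|=\infty$ and note that $p$ must be odd for both Proposition \ref{4.1} and Theorem \ref{2.3} (with $d=2$) to apply.
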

\begin{proof}[Proof of Corollary \ref{4.2}]
By Theorem \ref{2.3}, if $c$ satisfies $|\gamma - c| < |\gamma|^{1/2} = p$, there exists a homeomorphism $h_1 : \mathcal{J}(f_c) \rightarrow \mathcal{J}(f_{\gamma})$ such that $h_1 \circ f_c = f_{\gamma} \circ h_1$ on $\mathcal{J}(f_c)$.

Moreover, considering a map
$$
h_2(z) : =  p \cdot z + 1 / 2 ,
$$
we have that $h_2 : \mathcal{J}(f_\gamma) \rightarrow \mathcal{J}(F)$ is a homeomorphism and satisfies $h_2 \circ f_\gamma = F \circ h_2 $ on $\mathcal{J}(f_\gamma)$ where $F$ is the polynomial map defined in Proposition $4.1$.

On the other hand, by Proposition $4.1$, there exists a homeomorphism $h_3 : \mathcal{J}(F) \rightarrow \Sigma$ such that $h_3 \circ F = \sigma \circ h_3$ on $\mathcal{J}(F)$. 
That is, the following commutative diagrams are obtained.
\begin{align*}
\begin{CD}
\mathcal{J}(f_c) @> h_1 >> \mathcal{J}(f_\gamma) @> h_2 >> \mathcal{J}(F) @> h_3 >> \Sigma \\
@V{f_c}VV @VV{f_\gamma}V @VV{F}V @VV{\sigma}V \\
\mathcal{J}(f_c) @> h_1 >> \mathcal{J}(f_\gamma) @> h_2 >> \mathcal{J}(F) @> h_3 >> \Sigma.
\end{CD}
\end{align*}
Setting $h := h_3 \circ h_2 \circ h_1 : \mathcal{J}(f_c) \rightarrow \Sigma$, we obtain a homeomorphism satisfying $h \circ f_c = \sigma \circ h$ on $\mathcal{J}(f_c)$.
\end{proof}
%



\section*{Acknowledgement}

The author would like to thank Tomoki Kawahira for his advice.


\small{

}


\begin{thebibliography} {ABC}

\bibitem[Beard00]{Beard00} A. F. Beardon, \emph{Iteration of Rational Functions}, Springer, 2000. 


\bibitem[Ben00]{Ben00} R. L. Benedetto, \emph{$p$-adic dynamics and Sullivan's no wandering domains theorem}, Composito Math. (3) {\bf 122} (2000), 281--298. 

\bibitem[Ben01]{Ben01} R. L. Benedetto, \emph{Reduction, Dynamics, and Julia Sets of Rational Functions}, Journal of Number Theory {\bf 86} (2001), 175--195. 

\bibitem[Ben02]{Ben02} R. L. Benedetto, \emph{Examples of wandering domains in $p$-adic polynomial dynamics}, C. R. Math. Acad. Sci. Paris (7) {\bf 335} (2002), 615--620. 


\bibitem[Hs00]{Hs00} L-C. Hsia, \emph{Closure of periodic points over a non-Archimedean field},  J. London Math. Soc. (3) {\bf 62} (2000), 685--700.

\bibitem[MSS83]{MSS83} R. Ma\~n\'e, P. Sad and D. Sullivan, \emph{On the dynamics of rational maps}, Ann. Sci. \'Ecole Norm. Sup. (2) {\bf 16} (1983), 193--217.



\bibitem[Miln06]{Miln06} J. Milnor, \emph{Dynamics in one complex variable}, Third edition, Princeton University Press, Princeton, NJ, 2006.



\bibitem[Rob00]{Rob00} A. M. Robert, \emph{A course in $p$-adic analysis}, Springer-Verlag, New York, 2000.

\bibitem[Riv03a]{Riv03a} J. Rivera-Letelier, \emph{Dynamique des fonctions rationnelles sur des corps locaux}, Ast\'erisque {\bf 287} (2003), 147--230.

\bibitem[Riv03b]{Riv03b} J. Rivera-Letelier, \emph{Espace hyperbolique p-adique et dynamique des fonctions rationnelles}, Composito Math. {\bf138} (2003), 199--231

\bibitem[Silv07]{Silv07} J. H. Silverman, \emph{The arithmetic of dynamical systems}, Springer, New York, 2007.



\end{thebibliography}
\end{document}